\documentclass[11pt]{article}
\usepackage{amsmath,amsthm,amssymb,a4wide} 
\usepackage[margin=3cm]{geometry}
\usepackage{graphicx}
\usepackage{epsfig} 
\usepackage{slashed}  
\usepackage[hidelinks]{hyperref}
\setcounter{tocdepth}{2}
\usepackage[affil-it]{authblk}
\usepackage{lmodern}
\usepackage[T1]{fontenc}
\allowdisplaybreaks

\theoremstyle{plain}  
\newtheorem{theorem}{Theorem}[section]
\newtheorem{proposition}[theorem]{Proposition}
\newtheorem{lemma}[theorem]{Lemma}

\theoremstyle{remark}
\newtheorem{remark}[theorem]{Remark}

\numberwithin{equation}{section} 
\numberwithin{figure}{section}  
\usepackage{amssymb, amscd, mathrsfs, wasysym} 


\newcommand{\di}{\textrm{d}}

\newcommand \la \langle
\newcommand \ra \rangle

\newcommand \Kcal {\mathcal{K}}
\newcommand \Hcal {\mathcal{H}}

\newcommand \underdel {\underline \partial}

\newcommand \trianglerightNEW \triangleright

\newcommand \auth {\textsc}

\newcommand \bei {\begin{itemize}}
\newcommand \eei {\end{itemize}}
\newcommand \be {\begin{equation}}
\newcommand \bel {\be\label}
\newcommand \ee {\end{equation}}
\newcommand \bea {\be\aligned}
\newcommand \eea {\endaligned\ee}
\newcommand \del \partial
\newcommand \RR {\mathbb R}

\newcommand \eps \epsilon

\newcommand{\define}{:=}

\usepackage[most]{tcolorbox} 

\let\oldmarginpar\marginpar
\renewcommand\marginpar[1]{\-\oldmarginpar[\raggedleft\footnotesize #1]%
{\raggedright\footnotesize #1}}

\newcommand{\myfootnote}[1]{
    \renewcommand{\thefootnote}{}
    \footnotetext{\hspace{-16.5pt}\scriptsize#1}
    \renewcommand{\thefootnote}{\arabic{footnote}}
}

\begin{document}
\title{\bf \Large 
Stability of some two dimensional wave maps: \\ a wave--Klein-Gordon model} 
\author{Shijie Dong${}^\ast$
and Zoe Wyatt${}^\dagger$
}
\date{}
\maketitle

\begin{abstract}
We are interested in the stability of a class of totally geodesic wave maps, as recently studied in \cite{Abbrescia-Chen, DM20, DMZ22}. The relevant equations of motion are a system of coupled semilinear wave and Klein-Gordon equations in $\RR^{1+n}$ whose nonlinearities have critical time decay when $n=2$.
In this paper we use a pure energy method to show  global existence when $n = 2$. By carefully examining the structure of the nonlinear terms, we are able to obtain uniform energy bounds at lower orders. This allows us to prove pointwise decay estimates and also to reduce the required regularity. 
\end{abstract}

\section{Introduction}
\myfootnote{{\sl MSC codes:} 35L05, 35L52, 35L71 \\
{\sl Email addresses}: ${}^\ast$dongsj@sustech.edu.cn, shijiedong1991@hotmail.com, ${}^\dagger$zoe.wyatt@kcl.ac.uk, zw253@cam.ac.uk. \\
${}^\ast$Southern University of Science and Technology, SUSTech International Center for Mathematics and Department of Mathematics, 518055 Shenzhen,  China. \\
${}^\dagger$Department of Mathematics, King’s College London,  Strand Building, Strand, London, WC2R 2LS, UK.
}

In this paper we study the global stability of certain solutions of the wave maps equation from Minkowski space $(\RR^{1+n}, \eta)$ to an $n-$dimensional space-form $(M, g)$ whose constant sectional curvature we denote by $\kappa_M$. Our problem comes from interesting recent work of Abbrescia and Chen \cite{Abbrescia-Chen}. We briefly describe their result here, and refer to the original paper \cite{Abbrescia-Chen} for full details. The authors study a class of wave maps which factorise as
\bel{eq:factorisation}
\RR^{1+n}\overset{\varphi_S}{\longrightarrow}\RR\overset{\varphi_I}\longrightarrow M.
\ee
Here $\varphi_S$ is a semi-Riemannian submersion to either $(\RR, \delta)$ or $(\RR, -\delta)$, and $\varphi_I$ is a Riemannian immersion from $(\RR, \delta)$ to $(M, g)$. The map $\varphi_S$ can be classified as spacelike or timelike depending on whether its codomain $\RR$ is equipped with the standard Euclidean metric $\delta$ or with $-\delta$.  The factorisation \eqref{eq:factorisation} implies that the wave map  $\varphi:= \varphi_I \circ \varphi_S$ is a totally geodesic wave map with infinite energy. The work \cite{Abbrescia-Chen} then considers the stability of $\varphi$ to small compact perturbations, which, after a careful expansion using geodesic normal
coordinates, amounts to proving stability for a coupled system of wave--Klein-Gordon equations in $\RR^{1+n}$. In particular, in \cite{Abbrescia-Chen} the following result is shown for $n \geq 3$:

\begin{theorem}\label{thm:WaveMap}
A totally geodesic map satisfying the factorisation \eqref{eq:factorisation} is globally asymptotically stable as a solution to the initial value problem for the wave maps equation under compactly supported smooth perturbations, provided that either (1) $\varphi_S$ is timelike and $\kappa_M <0$, or that (2) $\varphi_S$ is spacelike and $\kappa_M>0$. 
\end{theorem}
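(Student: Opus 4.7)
The plan is to linearise the wave map equation around the background $\varphi_0 := \varphi_I\circ\varphi_S$, reduce the problem to a coupled semilinear wave--Klein--Gordon system on $\RR^{1+n}$, and then close a global bootstrap argument via the vector field method.

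First I would introduce Fermi (geodesic normal) coordinates on $(M,g)$ along the image geodesic $\gamma:=\varphi_I(\RR)\subset M$. A nearby map is then encoded by a longitudinal scalar $v$ parametrising position along $\gamma$ together with $n-1$ transverse components $w=(w^A)$. Expanding $\Box_\eta\varphi^a+\Gamma^a_{bc}(\varphi)\,\partial_\mu\varphi^b\partial^\mu\varphi^c=0$ in these coordinates, and writing $u:=v-\varphi_S$, I expect the perturbation to satisfy schematically
\[
\Box u = Q_u(w,\partial u,\partial w), \qquad \Box w^A + m^2\, w^A = Q^A_w(w,\partial u,\partial w),
\]
where the right-hand sides are at least quadratic and $m^2$ is built from the Riemann tensor of $(M,g)$ contracted with $\partial\varphi_0\otimes\partial\varphi_0$. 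Since $\varphi_S$ is a semi-Riemannian submersion to $\RR$, the quantity $|\partial\varphi_S|^2_\eta$ is a nonzero constant whose sign equals the signature of the codomain.

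Second, using the space-form identity $R_{ABCD}=\kappa_M(g_{AC}g_{BD}-g_{AD}g_{BC})$ one computes $m^2 = -\kappa_M\,|\partial\varphi_S|^2_\eta$, so that $m^2>0$ precisely when $\varphi_S$ is timelike with $\kappa_M<0$ or when $\varphi_S$ is spacelike with $\kappa_M>0$, which are exactly the two cases in the theorem. In both admissible regimes the perturbation therefore obeys a genuine wave--Klein--Gordon system with strictly positive mass on the transverse sector. I would then run a bootstrap by the vector field method, commuting the system with translations, rotations, and Lorentz boosts (avoiding the scaling vector field, which fails to commute with $\Box+m^2$), deriving hierarchies of weighted energies, and invoking Klainerman--Sobolev type inequalities to extract the classical pointwise decay $t^{-(n-1)/2}$ for $u$ and the faster rate $t^{-n/2}$ for $w$. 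For $n\ge 3$ these rates make the quadratic interactions integrable in time at sufficiently high regularity, after one exploits the null/geometric structure of the wave-map nonlinearities to handle the $\partial u\,\partial u$ self-interactions in $Q_u$.

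The hardest step I anticipate is closing the $u$-estimate in the critical case $n=3$: the free wave decay is only $t^{-1}$, so the $\partial u\,\partial u$ terms in $Q_u$ are borderline and a naive vector-field bootstrap loses a logarithm. However, the wave-map geometry ensures that these self-interactions carry the classical null structure for maps into a Riemannian target, and this cancellation can be exploited via Klainerman's null-form estimates (or equivalently via a ghost-weight argument in the spirit of Alinhac) to close the argument. The wave--Klein--Gordon coupling terms should be easier since they either contain an undifferentiated factor of $w$, which enjoys the faster Klein--Gordon decay, or a factor of $\partial w$, which poses no integrability difficulty for $n\ge 3$.
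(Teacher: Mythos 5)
Your first two steps --- Fermi coordinates along $\gamma$, and the observation that $m^2$ is proportional to $\kappa_M$ times the signature of $\varphi_S$, giving positive mass in precisely the two admissible regimes --- are exactly the reduction carried out by Abbrescia and Chen \cite{Abbrescia-Chen}, reproduced here as \eqref{Intro:WaveMapEq}. Your third step, however, rests on a structural claim that is both unnecessary and false. You anticipate quadratic $\partial u\,\partial u$ self-interactions in the wave equation and propose to tame them via null-form estimates. But the whole point of the Fermi-coordinate expansion around a geodesic is that $\Gamma^a_{bc}(\varphi_S,0)=0$, so the would-be quadratic term $\Gamma^a_{bc}\,\partial_\mu u^b\partial^\mu u^c$ is absent; as \eqref{Intro:WaveMapEq} shows, the quadratic forcings are $\sum_{i\ge 2}u^i\partial_t u^i$ in the wave equation and $u^p\,\partial_t u^1$ in the Klein--Gordon equations, and a pure wave self-interaction $\partial u^1\,\partial u^1$ first appears only at cubic order, always multiplied by a Klein--Gordon factor. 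Accordingly the model system \eqref{eq:model-wave-map} is treated under \emph{no} null hypothesis on the constants; the structure that is actually exploited is the \emph{divergence} form of the quadratic wave nonlinearity, via the Katayama decomposition $w = W_1 + P^\alpha\partial_\alpha W_2$.

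There is also a scope mismatch. You aim at $n\ge 3$ with $n=3$ as the borderline --- that is the result of \cite{Abbrescia-Chen}, which Theorem \ref{thm:WaveMap} merely quotes. The present paper's own contribution is the extension to $n=2$, where wave decay drops to $t^{-1/2}$, Klein--Gordon decay to $t^{-1}$, and both quadratic interactions become borderline even though neither is a null form. Closing that case requires the hyperboloidal foliation together with the $L^1$ form of the energy inequality \eqref{eq:w-EE-L1} to redistribute $(t-r)$-weights across the factors, and the conformal energy estimate of Ma to recover control of the scaling vector field (Corollary \ref{coro:L2}) --- not the flat-slice vector-field method with rotations and boosts that you describe. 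Your outline, even if carried out, would not reach the case this paper actually proves.
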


In the present paper we extend Theorem \ref{thm:WaveMap} to the critical case of $n=2$.

\subsection{Model Problem}
In the present paper we consider the following initial value problem for a system of coupled wave and Klein-Gordon variables $\{w, \vec{v}\}$ in $\RR^{1+2}$, where $\vec{v}=(v^1, \ldots, v^{p})$ for some $p \in \mathbb{N}$, obeying the equations:
\begin{subequations}\label{eq:model-wave-map}
\bel{eq:model-wave-map1}
\aligned
-\Box w &= P^\alpha \del_\alpha (|\vec{v}|^2) + \mathcal{F}^0(w, \vec{v}),
\\
-\Box v^i + v^i &= \widetilde{P}^\alpha v^i \del_\alpha w + \mathcal{F}^i (w, \vec{v}), \qquad i=1,\ldots, p
\endaligned
\ee 
where we define $\Box \define -\del_t^2 + \del_{x^1}^2 + \del_{x^2}^2$,  $|\vec{v}|^2 \define \sum_{i=1}^{p}(v^i)^2$ and also
\be
\aligned
\mathcal{F}^0(w, \vec{v}) &\define (A^{1})^{\alpha\beta}_i v^i \del_\alpha w \del_\beta w +  (A^{2})^\alpha_{ij} v^i v^j \del_\alpha w + (A^{3})^{\alpha\beta}_{ij} v^i \del_\alpha v^j \del_\beta w + (A^{4})_{ijk}  v^i v^j v^k 
\\
&\quad +(A^{5})^{\alpha}_{ijk}  v^i v^j \del_\alpha v^k + (A^6)^{\alpha\beta}_{ijk} v^i \del_\alpha v^j \del_\beta v^k,
\\
\mathcal{F}^i(w, \vec{v}) &\define  (B^{1})^{\alpha\beta i}_j v^j \del_\alpha w \del_\beta w + (B^2)^{\alpha i}_{jk}v^j v^k \del_\alpha  w + (B^{3})^{\alpha\beta i}_{jk} v^k \del_\alpha v^j \del_\beta w + (B^{4})^i_{jkl}v^j v^kv^l
\\
& \quad + (B^5)^{\alpha i}_{jkl} v^j v^k \del_\alpha v^l + (B^{6})^{\alpha\beta i}_{jkl} v^j \del_\alpha v^k \del_\beta v^l.
\endaligned
\ee
Furthermore $\{ P^\alpha, \widetilde{P}^\alpha, A^1, \ldots A^6, B^1, \ldots, B^6 \}$  are a collection of arbitrary constants which we do \emph{not} restrict to satisfy any null condition.
We have used Einstein summation convention above, where spacetime indices are represented by Greek letters $\alpha, \beta, \gamma\in\{0,1,2\}$ and Roman letters $i, j, k \in \{1, \ldots, p\}$ indicate a sum over components of $\vec{v}$. 

The initial data are prescribed on the slice $t=t_0=2$
\bel{eq:ID-wave-map}
\big( w, \del_t w, \vec{v}, \del_t \vec{v} \big)(2) = (w_0, w_1, \vec{v}_0, \vec{v}_1),
\ee
\end{subequations}
and these, as also assumed in \cite{Abbrescia-Chen},  are assumed to be compactly supported in $\{ (t, x) \in\RR^{1+2} : t=2, |x| \leq 1 \}$.

\subsection{Main statement and difficulties}

\begin{theorem}\label{thm1}
Consider in $\RR^{1+2}$ the initial value problem \eqref{eq:model-wave-map}
and let $N\geq 3$ be an integer. There exists $\eps_0>0$ such that for all $0<\eps<\eps_0$ and all compactly supported initial data satisfying the smallness condition
\bel{eq:thm-data}
\|w_0\|_{H^{N+1}(\mathbb{R}^2)} + \|w_1\|_{H^{N}(\mathbb{R}^2)}
+ \|\vec{v}_0\|_{H^{N+1}(\mathbb{R}^2)} + \|\vec{v}_1\|_{H^{N}(\mathbb{R}^2)}
\leq \eps,
\ee
the Cauchy problem \eqref{eq:model-wave-map} admits a global-in-time solution $(w, \vec{v})$ with pointwise decay estimates
\bel{eq:thm-decay}
|\del w(t,x)| \lesssim t^{-1/2}(t-|x|)^{-1/2}, 
\qquad
|\vec{v}(t,x)| \lesssim t^{-1},
\ee
as well as the almost sharp pointwise decay estimate
\bel{eq:thm-decay2}
|w(t,x)| \lesssim t^{-1/2+\delta/2}(t-|x|)^{-1/2+\delta/2}, 
\ee
with $0< \delta \ll 1$.
Furthermore, the lower-order energies are uniformly bounded. That is,
\bel{eq:thm-energy}
\big\| (s/t) \del Z^I w \big\|_{L^2_f(\Hcal_s)} + \big\| (s/t) \del Z^I \vec{v} \big\|_{L^2_f(\Hcal_s)} + \big\| Z^I \vec{v} \big\|_{L^2_f(\Hcal_s)}
\lesssim 1,
\ee
for $|I| \leq N-1$ (see also the definitions in Section \ref{subsec:energies}).
\end{theorem}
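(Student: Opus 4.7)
My plan is to run a continuous induction on two families of energies at two different regularity levels. Let $L_a = x_a \partial_t + t \partial_a$ denote the Lorentz boosts and let $\partial^I L^J$ range over products of translations and boosts. I would impose bootstrap assumptions of the schematic form
\begin{align*}
E(\partial^I L^J w, t)^{1/2} + E_1(\partial^I L^J \vec{v}, t)^{1/2} &\leq C_1 \varepsilon \, t^{\delta}, \qquad |I|+|J| \leq N, \\
E(\partial^I L^J w, t)^{1/2} + E_1(\partial^I L^J \vec{v}, t)^{1/2} &\leq C_1 \varepsilon, \qquad |I|+|J| \leq N-1,
\end{align*}
with $C_1 \gg 1$ and $0 < \delta \ll 1$ to be chosen. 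The boosts commute cleanly with $\Box$ and produce only harmless Klein--Gordon remainders when commuted with $\Box - 1$, so the standard commutator calculus applies. From these bounds the pointwise estimate \eqref{eq:thm-decay} will follow via Klainerman--Sobolev inequalities adapted to two dimensions: the mass term yields $|\vec{v}| \lesssim t^{-1}$ at low order, while for $\partial w$ I would use the additional gain from the ghost-weight estimate (see below) to promote the naive $t^{-1/2}$ bound to the sharper $t^{-1/2}(t-|x|)^{-1/2}$. The almost-sharp bound \eqref{eq:thm-decay2} for $w$ itself would then come from integrating $|\partial w|$ along outgoing rays, absorbing the logarithmic loss into a $t^{\delta/2}$ factor coming from the top-order growth.

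\textbf{Structure of the nonlinear terms.} Every cubic term in $\mathcal{F}^0$ and $\mathcal{F}^i$ carries at least one factor of $\vec{v}$; combined with the $t^{-1}$ decay of $\vec{v}$ these generate time-integrable source terms in the energy inequality. Each quadratic term of the Klein--Gordon nonlinearity $\mathcal{F}^i$ is also preceded by a $v^i$ factor, so the same reasoning handles them. The truly critical quadratic terms therefore live in the wave equation---namely $P^\alpha \partial_\alpha(|\vec{v}|^2)$ and the semilinear piece $(A^1)^{\alpha\beta}_i v^i \partial_\alpha w \partial_\beta w$ in $\mathcal{F}^0$. For the latter, I plan to run the Alinhac ghost-weight energy identity with multiplier $e^{q(|x|-t)} \partial_t w$, which produces the integrated tangential-derivative gain $\int_{2}^{t}\!\int_{\mathbb{R}^2} e^{q(r-s)}|\bar\partial w|^2 \, dx\, ds$ on the left-hand side; combined with $|v^i| \lesssim s^{-1}$ and a null-frame decomposition of $\partial w$ into its ``bad'' radial and ``good'' tangential components, this renders the term absorbable.

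\textbf{The main obstacle.} The genuinely hard step is the leading total-derivative source $P^\alpha \partial_\alpha(|\vec{v}|^2)$, which is critical in 2D and does not close at the energy level when treated pointwise. The key idea is to exploit its divergence form via a normal-form-corrected energy of schematic type
\[
\widetilde E(w, \vec{v}, t) := E(w, t) + 2 P^\alpha \int_{\mathbb{R}^2} \partial_\alpha w \, |\vec{v}|^2 \, dx,
\]
so that differentiating in $t$ and using integration by parts together with the equations of motion recasts the problematic contribution as bulk terms of the schematic shape $|\vec{v}|^2 \Box w$ and $v^i \partial v^i \partial w$. The first is absorbed using $|\vec{v}|^2 \lesssim t^{-2}$, which is time-integrable, while the second is subcritical thanks to the Klein--Gordon prefactor. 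Commuting $\partial^I L^J$ through the expression preserves the total-derivative structure on the leading piece, with the remainders fitting inside the bootstrap by the permitted $t^\delta$ growth at order $N$ and uniform control at order $N-1$. Once this modified-energy inequality closes, one recovers the low-order bound \eqref{eq:thm-energy}, improves the bootstrap constant $C_1$, and concludes via a standard continuity argument; global existence and the pointwise decay statements \eqref{eq:thm-decay}--\eqref{eq:thm-decay2} follow.
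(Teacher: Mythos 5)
Your proposal takes a genuinely different route from the paper and contains a gap at the key step. The paper's proof is set entirely in the hyperboloidal foliation and uses (i) a Katayama-type substitution $w = W_1 + P^\alpha \del_\alpha W_2$, with the conformal energy estimate of Proposition~\ref{lem:ConformalEnergy} applied to $W_1$ and Lemma~\ref{lem:Wavedeldel} applied to $W_2$ to produce the weighted bounds $|\del \del^I L^J w| \lesssim \eps\, s^{-1+\delta}(t-r)^{-1}$ and $\|(t-r)(s/t)\del \del^I L^J w\|_{L^2_f(\Hcal_s)}\lesssim \eps s^\delta$ of Proposition~\ref{prop:lower-order-wave-sup}, and (ii) the $L^1$ version \eqref{eq:w-EE-L1} of the energy inequality, which lets one pair the $(t-r)$-weight on $\del w$ against a $(t-r)^{-1}$-weighted sup bound on the Klein--Gordon factor. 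Neither ingredient appears in your argument.

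The gap in your proposal is in the treatment of the residual left over after the normal-form correction. Leaving aside sign issues (the correction for $\alpha=0$ needs the opposite sign to that for $\alpha=a$), your corrected energy $\widetilde E$ produces, after the cancellations and an integration by parts, bulk terms of the schematic form $\int_{\RR^2} \del w \cdot v \cdot \del v \, \di x$. You assert these are ``subcritical thanks to the Klein--Gordon prefactor,'' but in $n=2$ with flat slices the best one obtains by pairing $\|\del w\|_{L^2}\lesssim\eps$ with $\|v\|_{L^\infty}\|\del v\|_{L^2}\lesssim \eps^2 t^{-1}$ (or any other Hölder split using only the standard energy and pointwise bounds) is $\eps^3 t^{-1}$, which is not time-integrable. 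This is precisely the 2D criticality the theorem is about. Closing this requires an extra factor of $(t-r)$-decay: one needs a bound like $\|(t-r)\del w\|_{L^2}\lesssim \eps t^\delta$ (supplied in the paper by the conformal energy on $W_1$ and Lemma~\ref{lem:Wavedeldel} on $W_2$) paired with a $(t-r)^{-1}$-weighted sup bound on $v$ of the type \eqref{eq:weighted-v-decay1}. Without stating and proving such a weighted estimate, the normal-form step does not close.

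Two smaller points. First, the ghost-weight device is neither available nor needed for $(A^1)^{\alpha\beta}_i v^i \del_\alpha w \del_\beta w$: the constants $A^1$ are arbitrary (no null structure is assumed), so a null-frame decomposition of $\del w$ does not produce a gain, and the ghost-weight spacetime integral controls only the tangential component. On the other hand, because of the $v^i$ prefactor this term is already cubic and handled by the paper without any such device (see the proof of Proposition~\ref{prop:lower-order-wave-sup}). Second, the almost-sharp decay \eqref{eq:thm-decay2} for the undifferentiated $w$ does not follow by integrating $|\del w|$ along rays --- in the paper it comes from the conformal-energy $L^2$ bound \eqref{eq:L2W1} on $W_1$ combined with the Sobolev estimate \eqref{eq:Sobolev3} and the $W_2$ bound \eqref{eq:supW2-10}, which is what produces the $(t-r)^{-1/2+\delta/2}$ interior weight rather than a logarithm.
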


\begin{remark}
The smallest integer for the order of regularity with which one can conduct $L^2$--$L^\infty$ energy estimates in dimension $n=2$ is $N=3$. We reach this low regularity in Theorem \ref{thm1} by proving uniform energy bounds and carefully analysing the types of nonlinear terms appearing in the PDEs. The strategy we used to reduce regularity requirement can also be applied to the study of other nonlinear equations, for instance \cite{Dong2006}.
Also, we note that the paper \cite{Abbrescia-Chen} proves an analogous version of Theorem \ref{thm1} in dimension $n=3$ with $N \geq 3$. Given the slower decay of wave and Klein-Gordon fields in two spatial dimensions, it is particularly interesting that we can also obtain global existence down to $N = 3$ when $n=2$. 
We also refer to \cite{DM20, DMZ22} for the study of two dimensional totally geodesic wave maps.
\end{remark}

\begin{remark}
To simplify notation in the present paper, when we write $\vec{v}$ in an estimate it should be understood that the estimate holds for each component $v^i$ for $i = 1, \ldots, p$. 
\end{remark}

\begin{remark}
One key feature of the PDE system \eqref{eq:model-wave-map} is the divergence structure appearing in the quadratic part of the wave nonlinearity. Another key feature is the fact that all other quadratic and cubic terms appearing in the wave and Klein-Gordon nonlinearities involve at least one undifferentiated Klein-Gordon factor. See also Remark \ref{rem:nonlinear-structure}. 
\end{remark}

\paragraph{Critical nonlinearities and difficulties.}
The slow decay of linear wave and linear Klein-Gordon fields in two spatial dimensions makes the proof of global existence and decay for the PDE system \eqref{eq:model-wave-map} difficult. In particular the quadratic nonlinearities are critical when $n=2$, in the sense that by assuming $w$ and $v^i$ obey linear decay estimates, the best we can expect for the nonlinearities (in the flat $t=$const slicing) is
$$
\| P^\alpha \del_\alpha (|\vec{v}|^2) \|_{L^2(\RR^2)}\lesssim t^{-1},
\qquad 
\| \widetilde{P}^\alpha \vec{v} \del_\alpha w \|_{L^2(\RR^2)}\lesssim t^{-1}.
$$
Similarly, in the hyperboloidal foliation with hyperboloidal time $s = \sqrt{t^2 - |x|^2}$, the best we can expect (using the Klainerman-Sobolev inequality) is
\bel{Intro-Naive-Est}
 \aligned
\| (s/t) \del w \|_{L^2_f(\Hcal_s)} &\lesssim 1,
\qquad
&|\del w| &\lesssim t^{-1/2} (t-r)^{-1/2} \lesssim s^{-1},
\\
\| v \|_{L^2_f(\Hcal_s)} &\lesssim 1,
\qquad
&|v| &\lesssim t^{-1}.
\endaligned
\ee
Here $\Hcal_s$ are constant $s$-surfaces defined in \eqref{def:prelioms2} and $L^2_f(\Hcal_s)$ is defined in \eqref{flat-int}.
Naive calculations then lead us to the estimates
$$
\aligned
\| P^\alpha \del_\alpha (|\vec{v}|^2)\|_{L^2_f(\Hcal_s)} 
&\lesssim \| (s/t) \del \vec{v} \|_{L^2_f(\Hcal_s)} \| (t/s) \vec{v} \|_{L^\infty(\Hcal_s)}
\lesssim s^{-1},
\\
\| \widetilde{P}^\alpha \vec{v} \del_\alpha w\|_{L^2_f(\Hcal_s)} 
&\lesssim \|(s/t) \del w\|_{L^2_f(\Hcal_s)} \| (t/s) \vec{v}\|_{L^\infty(\Hcal_s)} 
\lesssim s^{-1}.
\endaligned
$$
We see both quantities are at the borderline of integrability. For pure wave equations, such borderline nonlinearities can lead to finite-time blow-up, see for example \cite{John2}.

Global existence for the model problem \eqref{eq:model-wave-map} with small compactly supported initial data with regularity $N \geq 7$ was recently established by Duan and Ma in \cite{DM20}. In this work, the authors used the $L^2-L^\infty$ hyperboloidal energy method and allowed their energy to slowly grow at all orders. Due to this growth, they obtained decay for the differentiated wave component $|\del w|$  through direct  $L^\infty-L^\infty$ estimates of the fundamental solution to the wave equation. 

In the present paper we use a simpler analysis based on a pure energy method, which we believe provides an interesting alternative to the proof of \cite{DM20}. In particular, we show that the energy is uniformly bounded at all orders except the top-order, see \eqref{eq:thm-energy}. This is somewhat surprising given the criticality of the nonlinearities, and relies on a careful analysis of the type of terms appearing in the nonlinearity, see Remark \ref{rem:nonlinear-structure}. The uniform control then allows us to lower the required regularity, and obtain decay estimates \eqref{eq:thm-decay}  for $|\del w|$ from the standard Klainerman-Sobolev inequalities. 

One of the key ideas in our proof is to use an $L^1$ version of the classical energy inequality, given in \eqref{eq:w-EE-L1}. This allows us to redistribute various $t,r$ weights and thus exploit the additional $(t-r)$-decay enjoyed in \eqref{Intro-Naive-Est} by the wave component. Using this for the wave equation, for example at lowest order, we find:
\bel{eq:intro10}
\aligned
\|P^\alpha \del_\alpha(|\vec{v}|^2)\cdot (s/t)\del_t w \|_{L^1_f(\Hcal_{s})}
&\lesssim
\| \del \vec{v} \|_{L^2_f(\Hcal_{s})}
\| (t-r)(s/t) \del w\|_{L^2_f(\Hcal_{s})}
\| (t-r)^{-1} \vec{v} \|_{L^\infty_f(\Hcal_{s})}
\\
&
\lesssim s^{-2}.
\endaligned
\ee
The energy inequality at lower-orders for the Klein-Gordon components is similar, since we see that the term that needs to be controlled, namely
$$
\|\widetilde{P}^\alpha \vec{v} \del_\alpha w\cdot (s/t)\del_t \vec{v} \|_{L^1_f(\Hcal_{s})},
$$
has exactly the same structure as in \eqref{eq:intro10}. 

Another new feature of our proof is the estimate \eqref{eq:thm-decay2} on the undifferentiated wave component. We are able to prove this estimate by using the divergence type structure in the quadratic nonlinearity appearing in the wave equation in \eqref{eq:model-wave-map1}. This allows us to perform a transformation, due to Katayama \cite{Katayama12a}, of the form
$$
w=W_1+P^\alpha \del_\alpha W_2.
$$
Both $W_1$ and $W_2$ obey  inhomogeneous wave equations with  fast decaying nonlinearities, see \eqref{eq:aux-wave-map}.
We then obtain $L^2$ control on $W_1$, and subsequent $L^\infty$ control, via a conformal energy estimate which is used to prove \eqref{eq:thm-decay2}.

\paragraph{Theorem \ref{thm:WaveMap} and relationship to the Model Problem.}
The equations of motion relevant to Theorem \ref{thm:WaveMap} have been derived by Abbrescia and Chen in \cite[\S 3.1.1, \S 3.1.2]{Abbrescia-Chen}. 
 For example, in the case (1) of Theorem \ref{thm:WaveMap}, the relevant perturbation variables $(u^1, u^2, \ldots, u^n)$ of the wave map obey the equations:
\begin{align}\notag
\Box u^1 &= -2 \sum_{i=2}^n u^i \del_t u^i - 2\sum_{i, j=2}^n \del^2_{ij} \Gamma^1_{k1}(\ell, \vec{0}) \cdot u^i u^j \cdot \eta(\di u^k, \di \ell)
\\
&\quad + \sum_{i=2}^n \del_i \Gamma^1_{jk}(\ell, \vec{0}) \cdot u^i \cdot\eta(\di u^j, \di u^k) - \sum_{i,j,k=2}^n \del^3_{ijk} \Gamma^1_{11}(\ell, \vec{0}) \cdot u^i u^j u^k
 + \textrm{l.o.t},\notag
\\
\Box u^p - u^p &= 2 u^p \del_t u^1 - 2\sum_{i,j=2}^n \del^2_{ij} \Gamma^p_{k1}(\ell, \vec{0}) \cdot u^i u^j \cdot \eta(\di u^k, \di\ell)\label{Intro:WaveMapEq} 
\\
&\quad + \sum_{i=2}^n \del_i \Gamma^1_{jk}(\ell, \vec{0}) \cdot u^i \cdot \eta(\di u^j, \di u^k) - \sum_{i,j,k=2}^n \del^3_{ijk} \Gamma^p_{11}(\ell, \vec{0}) \cdot u^i u^j u^k
+ \textrm{l.o.t},\notag
\end{align}
for $p = 2, \ldots, n$ with $\ell\equiv t$ and l.o.t denoting lower order terms. For definitions of the terms appearing here, see \cite{Abbrescia-Chen}. 
Case (2) of Theorem \ref{thm:WaveMap} gives a similar system of equations. 
A key point is that the Christoffel symbols and derivatives thereof appearing in \eqref{Intro:WaveMapEq}  can be treated as universal constants \cite[Lemma 2.5]{Abbrescia-Chen}. Thus, one sees structurally the similarity between equations \eqref{eq:model-wave-map} and  the model equations \eqref{eq:model-wave-map}. 
Full details of the relationship between the PDE system relevant to Theorem \ref{thm:WaveMap}   and the model problem treated in Theorem \ref{thm1} can be found in \cite{Abbrescia-Chen} and also \cite[\textsection5]{DM20}. 
We finally remark that the relationship  between these two PDE systems does not degenerate with $n$ in any way. Since linear dispersion for  wave and Klein-Gordon fields improves as $n$ increases, our arguments in Theorem \ref{thm1} also hold trivially for $n\geq 3$, and thus establish Theorem \ref{thm:WaveMap} for all $n\geq 2$.

\paragraph{The hyperboloidal foliation in $\RR^{1+2}$.}
In the present paper we study the system \eqref{eq:model-wave-map} using the vector-field method adapted to a hyperboloidal foliation of Minkowski spacetime. This method originates in work of Klainerman \cite{Klainerman85}, see also  H\"ormander \cite{Hormander}, in the context of Klein-Gordon equations. We also note the pioneering work on Strichartz estimates in the hyperbolic space by Tataru \cite{Tataru} in the context of wave equations. The method was later reintroduced to establish global-in-time existence results for nonlinear systems of coupled wave and
Klein-Gordon equations by LeFloch and Ma in  \cite{PLF-YM-book}  under the name of the ``hyperboloidal foliation method''. 
Recently, Ma \cite{Ma2017b} has initiated the study of coupled nonlinear wave and Klein-Gordon systems in $\RR^{1+2}$ using the hyperboloidal foliation method. This has led to  global existence results under the assumption of compact initial data for a large class of wave--Klein-Gordon(-type) systems including null forms and other interesting nonlinearities, see for example \cite{Dong1912, Dong2006, DW-2020, DW-21, DM20, Ma2019} and references cited within.

\paragraph{Outline.} The rest of this paper is organised as follows. In Section \ref{sec:prelim} we revisit some basic notations and other preliminaries of hyperbolic equations and the vector field method. Then Theorem \ref{thm1} is proved by using a standard bootstrap argument in Section \ref{sec:Proof}.

\section{Preliminaries}\label{sec:prelim}

We first introduce some basic notations in the framework of the vector field method. We adopt the signature $(-, +, +)$ in the $(1+2)$--dimensional Minkowski spacetime $(\mathbb{R}^{1+2}, \eta=\text{diag}(-1,+1,+1)$, and for the point $(t, x) = (x^0, x^1, x^2)$ in Cartesion coordinates we denote its spatial radius by $r \define | x | = \sqrt{(x^1)^2 + (x^2)^2}$. Spacetime indices are represented by Greek letters $\alpha, \beta, \gamma\in\{0,1,2\}$ while spatial indices are denoted by Roman letters $a, b, c\in\{1,2\}$. In our analysis we will frequently use the vector fields
$$
\del_\alpha \define \del_{x^\alpha}, \quad L_a\define  x_a \del_t + t \del_a,
\quad L_0 \define t\del_t + x^a \del_a,
$$
where $x_a:= \delta_{ab}x^b$. These are respectively called the translation vector fields, the Lorentz boosts and the scaling vector field. 
We also introduce the rotation vector fields $
\Omega_{ab}
\define x_a \del_b - x_b \del_a$.

 We next state some notation concerning the hyperboloidal foliation method used in \cite{PLF-YM-book}. Let $s_0 \geq 1$.
 Throughout the paper, we consider functions defined in the interior of the future light cone $\Kcal$, with vertex $(1, 0, 0)$ and boundary $\del\Kcal$:
\bea \label{def:prelioms}
\Kcal&\define \{(t, x): r< t-1 \},
\\
\del\Kcal&\define\{(t, x): r= t-1 \}.
\eea
We will consider hyperboloidal hypersurfaces $\Hcal_s$ with $s>1$ foliating the interior of $\Kcal$. We define $\Kcal_{[s_0, s_1]}$ to denote subsets of $\Kcal$ limited by two hyperboloids $\Hcal_{s_0}$ and $\Hcal_{s_1}$ with $s_0 \leq s_1$, and let $\del\Kcal_{[s_0, s_1]}$ denote the conical boundary. In summary:
\bea \label{def:prelioms2}
\Hcal_s&\define \{(t, x): t^2 - r^2 = s^2 \}, \quad s>1,
\\
\Kcal_{[s_0, s_1]} &\define \{(t, x)\in\Kcal: s_0^2 \leq t^2- r^2 \leq s_1^2\}, \quad s_0 \leq s_1,
\\
\del\Kcal_{[s_0, s_1]}&\define \{(t, x): s_0^2 \leq t^2- r^2 \leq s_1^2; r=t-1 \}.
\eea
For a point $(t, x) \in \Hcal_s \cap \Kcal$, we note the following relations
\bel{eq:s-t-identities}
|x| \leq t,
\qquad
s \leq t \leq {s^2 +1\over 2}.
\ee

The semi-hyperboloidal frame is defined by
$$
\underdel_0\define \del_t, \qquad \underdel_a\define {L_a \over t} = {x_a\over t}\del_t+ \del_a.
$$
Note that the vectors $\underdel_a$ generate the tangent space to the hyperboloids. The partial derivatives can also be expressed by the semi-hyperboloidal frame, which read
$$
\del_t = \underdel_0,
\qquad
\del_a = - {x_a\over t} \underdel_0 + \underdel_a.
$$
We write the following decomposition, valid on $\Hcal_s$, of the flat wave operator into the semi-hyperboloidal frame:  
\bea \label{eq:wavedecomp}
-\Box = \del_t^2 - \sum_{a=1}^2 \del_a^2 
&= \left(\frac{s}{t}\right)^2 \underline{\del}_0 \underline{\del}_0+2\frac{x^a}{t} \underline{\del}_a\underline{\del}_0
-\sum_{a=1}^2 \underline{\del}_a\underline{\del}_a 
- \frac{r^2}{t^3}\underline{\del}_0+\frac{2}{t}\underline{\del}_0
\\
&= \left(\frac{s}{t}\right)^2 \del_t \del_t 
+t^{-1} \left( 2\frac{x^a}{t} L_a \del_t
-\sum_{a=1}^2 L_a \underline{\del}_a 
- \frac{r^2}{t^2}\del_t+2\del_t \right).
\eea

\paragraph*{Standard Notation.}
Throughout the paper, we use $A\lesssim B$ to denote that there exists a generic constant $C>0$ such that $A\leq BC$. We adopt the Einstein summation convention although sometimes explicitly write out summands. For the ordered set $\{ Z_i\}_{i=1}^5\define\{ \del_{0},\del_1, \del_2, L_1,L_2\}$, and for any multi-index $I=(\alpha_1, \ldots, \alpha_5)$ of length $|I|\define \alpha_1 + \ldots + \alpha_5 = m$ we denote by $Z^I$ the $m$-th order vector field $Z^I\define\Gamma_1^{\alpha_1}\ldots\Gamma_5^{\alpha_5}$. A similar definition holds for $L^I$ where we only allow for $L_1, L_2$ to appear For $x \in \RR$ we write $\lfloor x \rfloor$ to denote the greatest integer less than or equal to $x$.


\subsection*{Auxiliary tools}\label{sec:Tools}
\subsection{Standard energy estimates}\label{subsec:energies}

Following \cite{Hormander, PLF-YM-book}, we introduce the energy functional $E_m$, in a Minkowski background, for a function $\phi$ defined on a hyperboloid $\Hcal_s$: 
\bel{eq:2energy} 
\aligned
E_m(s, \phi)
&\define
 \int_{\Hcal_s} \Big( \big(\del_t \phi \big)^2+ \sum_a \big(\del_a \phi \big)^2+ 2 (x^a/t) \del_t \phi \del_a \phi + m^2 \phi ^2 \Big) \, \di x
\\
               &= \int_{\Hcal_s} \Big( \big( (s/t)\del_t \phi \big)^2+ \sum_a \big(\underdel_a \phi \big)^2+ m^2 \phi^2 \Big) \, \di x
                \\
               &= \int_{\Hcal_s} \Big( \big( t^{-1} L_0 \phi \big)^2+ \sum_a \big( (s/t)\del_a \phi \big)^2+ \big( t^{-1}\Omega_{12} \phi \big)^2+ m^2 \phi^2 \Big) \, \di x.
 \endaligned
 \ee
The last two expressions of the energy functional $E_m(s, \phi)$ in \eqref{eq:2energy} imply
\be 
\int_{\Hcal_s} \Big( \big( (s/t)\del_t \phi \big)^2 + \sum_a \big( (s/t)\del_a \phi \big)^2 \Big) \, \di x
\leq 2 E_m(s, \phi).
\ee
 
In the massless case we denote $E(s, \phi)\define E_0(s, \phi)$ for simplicity.
In the above, the integral in $L^p_f(\Hcal_s)$ is defined from the standard (flat) metric in $\RR^2$, i.e. 
\bel{flat-int}
\|\phi \|_{L^p_f(\Hcal_s)}^p
\define\int_{\Hcal_s}|\phi |^p \, \di x 
=\int_{\RR^2} \big|\phi(\sqrt{s^2+r^2}, x) \big|^p \, \di x,
\qquad
p \in [1, +\infty).
\ee

\begin{proposition}[Energy estimates]\label{prop:BasicEnergyEstimate}
Let $m\geq 0$ and $\phi$ be a sufficiently regular function defined in the region $\Kcal_{[s_0, s]}$, vanishing near $\del\Kcal_{[s_0, s]}$ and satisfying
$$
-\Box \phi + m^2 \phi = f.
$$
For all $s \geq s_0$, it holds that
\bel{eq:w-EE} 
E_m(s, \phi)^{1/2}
\leq 
E_m(s_0, \phi)^{1/2}
+  \int_{s_0}^s \| f\|_{L^2_f(\Hcal_{\tau})} \, \di \tau.
\ee
Similarly for all $s \geq s_0$, it holds that
\bel{eq:w-EE-L1} 
E_m(s, \phi)
\leq 
E_m(s_0, \phi)
+ 2 \int_{s_0}^s \|f\cdot (\tau/t)\del_t \phi  \|_{L^1_f(\Hcal_{\tau})} \, \di \tau.
\ee
\end{proposition}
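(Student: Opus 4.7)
The approach is the classical multiplier method: use $\partial_t \phi$ as a multiplier and apply the divergence theorem on the truncated cone $\mathcal{K}_{[s_0, s]}$. First I would multiply the equation $-\Box \phi + m^2 \phi = f$ by $\partial_t \phi$ and rewrite the left-hand side as a pure spacetime divergence. A direct computation gives
\begin{equation*}
(-\Box \phi + m^2 \phi)\,\partial_t\phi
\;=\;
\partial_t\!\left[\tfrac{1}{2}(\partial_t\phi)^2 + \tfrac{1}{2}\sum_a(\partial_a\phi)^2 + \tfrac{1}{2}m^2\phi^2\right]
-\sum_a \partial_a\bigl(\partial_t\phi\,\partial_a\phi\bigr),
\end{equation*}
so that $\partial_\mu J^\mu = f\,\partial_t\phi$ for the associated energy current $J^\mu$.

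Next I would integrate this identity over $\mathcal{K}_{[s_0,s]}$. The boundary of this region consists of the two hyperboloidal caps $\mathcal{H}_{s_0}, \mathcal{H}_s$ together with the conical piece $\partial \mathcal{K}_{[s_0,s]}$; by the assumption that $\phi$ vanishes near $\partial \mathcal{K}_{[s_0,s]}$ the contribution from the conical part drops out. On $\mathcal{H}_\tau$, parametrising by $x \in \mathbb{R}^2$ with $t = \sqrt{\tau^2 + |x|^2}$, the unit future-directed normal is $n^\mu = (t, x^a)/\tau$ and the induced volume element is $(\tau/t)\,\di x$; a short computation shows that the flux of $J^\mu$ through $\mathcal{H}_\tau$ equals $\tfrac{1}{2}E_m(\tau,\phi)$, where the three equivalent rewritings of $E_m$ recorded in \eqref{eq:2energy} fall out naturally after grouping terms against the semi-hyperboloidal frame. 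On the spacetime side, changing variables from $(t,x)$ to hyperboloidal coordinates $(\tau,x)$ introduces a Jacobian $\partial t / \partial \tau = \tau/t$, giving
\begin{equation*}
E_m(s,\phi) - E_m(s_0,\phi)
\;=\;
2\int_{s_0}^s \int_{\mathcal{H}_\tau} f\cdot (\tau/t)\partial_t\phi\, \di x\,\di\tau.
\end{equation*}
Bounding the inner integral in absolute value and writing it as an $L^1_f(\mathcal{H}_\tau)$ norm yields the $L^1$-type estimate \eqref{eq:w-EE-L1} directly (up to the normalisation of $E_m$).

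To deduce the classical $L^2$ inequality \eqref{eq:w-EE}, I would view the identity above differentially: at almost every $s$,
\begin{equation*}
\frac{\di}{\di s}E_m(s,\phi)
\;\leq\;
2\,\|f\|_{L^2_f(\mathcal{H}_s)}\,\bigl\|(s/t)\partial_t\phi\bigr\|_{L^2_f(\mathcal{H}_s)}
\end{equation*}
by Cauchy--Schwarz. The second factor is controlled by $E_m(s,\phi)^{1/2}$ thanks to the second expression for the energy in \eqref{eq:2energy}, which shows $\bigl\|(s/t)\partial_t\phi\bigr\|_{L^2_f(\mathcal{H}_s)}^2 \leq E_m(s,\phi)$. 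Dividing by $E_m(s,\phi)^{1/2}$ gives $\frac{\di}{\di s} E_m(s,\phi)^{1/2} \leq \|f\|_{L^2_f(\mathcal{H}_s)}$, and integration from $s_0$ to $s$ yields \eqref{eq:w-EE}.

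The only delicate step is the divergence-theorem computation on the hyperboloidal cone, where one must carefully track the orientation of $\mathcal{H}_{s_0}$ versus $\mathcal{H}_s$, the vanishing of the conical contribution, and in particular the two Jacobian factors: the $(s/t)$ coming from the induced surface measure when rewriting fluxes as flat integrals, and the $(\tau/t)$ arising when expressing the spacetime volume form in hyperboloidal slabs. These two weights are what ensure the appearance of the characteristic $(\tau/t)\partial_t\phi$ factor in \eqref{eq:w-EE-L1}, and the identification of the flux with $E_m(\tau,\phi)$ on each cap. Once this bookkeeping is in place, the remainder of the argument is purely algebraic.
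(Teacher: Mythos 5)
The paper does not actually include a proof of Proposition~\ref{prop:BasicEnergyEstimate}: it is stated as a well-known result and implicitly deferred to the cited monograph \cite{PLF-YM-book}. Your proposal reconstructs precisely the standard multiplier argument that underlies that reference, and it is correct. The divergence identity for $(-\Box\phi+m^2\phi)\partial_t\phi$ is right, the conical boundary term vanishes by the support assumption, the flux of the energy current through $\Hcal_\tau$ reduces to the quantity appearing in \eqref{eq:2energy} once the two Jacobian factors you mention are taken into account, and the passage from the integral identity to \eqref{eq:w-EE} via Cauchy--Schwarz and $\|(s/t)\partial_t\phi\|_{L^2_f(\Hcal_s)}^2 \le E_m(s,\phi)$ is exactly the usual Gr\"onwall-style step (with the customary caveat about dividing by $E_m^{1/2}$, handled by perturbing to $E_m+\eps$). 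One remark on your normalisation caveat: carrying through the flux computation with the paper's definition of $E_m$ (no $\tfrac12$ prefactor) gives $E_m(s,\phi)-E_m(s_0,\phi)=2\int_{s_0}^s\int_{\Hcal_\tau}f\cdot(\tau/t)\partial_t\phi\,\di x\,\di\tau$, so \eqref{eq:w-EE-L1} as printed is missing a factor of $2$ on the integral (the factor cancels in \eqref{eq:w-EE} but not in the $L^1$ version). Since every downstream use of \eqref{eq:w-EE-L1} in the paper is via $\lesssim$, this has no effect on the results, and you were right to flag it as a normalisation matter rather than a genuine error in your argument.
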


\begin{remark}
The proof of both these energy estimates can be found in \cite[Proposition 2.3.1]{PLF-YM-book}.
The first energy estimate above is a classical, frequently used result. The second estimate, where the sourcing term appears in $L^1$ instead of $L^2$, is of course widely known yet perhaps not as frequently used. We emphasise that it plays a crucial role in the present paper. Another situation where the inhomogeneity is estimated in $L^1$ is the ghost-weight energy inequality of Alinhac \cite{Alinhac01b}. This has recently been applied to the Klein-Gordon--Zakharov equations in two spatial dimensions in \cite{Dong2006}.
\end{remark}

\subsection{Conformal energy estimates}
We now introduce a conformal-type energy which was adapted to the hyperboloidal foliation setting by Ma and Huang in three spatial dimensions in \cite{YM-HH}, see also Wong for work in two spatial dimensions \cite{Wong}. A key part of the following proposition, due to Ma \cite{Ma2019}, is in giving an estimate for the weighted $L^2$ norm $\| (s/t) \phi \|_{L^2_f(\Hcal_s)}$ for a wave component $\phi$.

\begin{proposition}[Conformal energy estimates]\label{lem:ConformalEnergy}
Let $\phi$ be a sufficiently regular function defined in the region $\Kcal_{[s_0, s]}$ and vanishing near $\del\Kcal_{[s_0, s]}$. Define the conformal energy
$$ 
E_{con} (s,\phi)
\define
\int_{\Hcal_s} \Big( \sum_a \big( s \underdel_a \phi \big)^2 + \big( K \phi + \phi \big)^2 \Big) \, \di x,
$$
in which we used the notation of the weighted inverted time translation
$$
K \phi 
\define \big( s \del_s + 2 x^a \underdel_a \big) \phi,
\qquad
\del_s \define {s\over t} \del_t.
$$
Then for all $s \geq s_0$ we have the energy estimate
\bel{eq:con-estimate} 
E_{con} (s,\phi)^{1/2}
\leq 
E_{con} (s_0, \phi)^{1/2}
+
2 \int_{s_0}^s \tau \| \Box \phi \|_{L^2_f(\Hcal_{\tau})} \, \di\tau.
\ee
Furthermore for all $s \geq s_0$ we have 
\bel{eq:l2type-wave} 
\|(s/t) \phi \|_{L^2_f(\Hcal_s)} \lesssim \|(s_0/t) \phi \|_{L^2_f(\Hcal_{s_0})} + \int_{s_0}^s \tau^{-1} E_{con}(\tau, \phi)^{1/2} \di \tau.
\ee
\end{proposition}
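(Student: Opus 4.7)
The plan is to prove both estimates by the multiplier method adapted to the hyperboloidal foliation, following the conformal multiplier technique of Ma-Huang and Ma. For \eqref{eq:con-estimate} I would use the weighted conformal multiplier $t(K\phi + \phi)$, which is the hyperboloidal incarnation of the classical Morawetz multiplier built from the conformal Killing vector $tK = (t^2+r^2)\del_t + 2tx^a\del_a$ of Minkowski space (this identification being straightforward from $\del_s = (s/t)\del_t$ and the formula $K = ((t^2+r^2)/t)\del_t + 2x^a\del_a$). The key step is to derive a divergence identity of the form
\be
t(K\phi + \phi)\,\Box\phi = \del_\alpha(J^\alpha) + (\text{terms integrating on } \Hcal_\tau \text{ to } \tfrac{d}{d\tau}E_{con}(\tau,\phi)),
\ee
by computing $\del_\alpha$ of an appropriate current quadratic in $\del\phi$ and $\phi$. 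The $+\phi$ correction inside the multiplier is precisely what is needed so that the $\phi^2$ contributions arising from the non-zero divergence of $tK$ combine into the boundary flux $(K\phi+\phi)^2$ appearing in $E_{con}$, while the $(s\underdel_a \phi)^2$ terms arise from the spatial fluxes. Integrating over $\Kcal_{[s_0,s]}$, applying the divergence theorem (the cone boundary vanishing by compact support), using the coarea relation $dV = (s/t)\,ds\,dx$, and applying Cauchy--Schwarz on each hyperboloid (with the bound $\|(s/t)(K\phi+\phi)\|_{L^2_f(\Hcal_s)} \leq E_{con}(s,\phi)^{1/2}$ producing the factor $s$) gives $\tfrac{d}{ds}E_{con}(s,\phi)^{1/2} \leq 2s\|\Box\phi\|_{L^2_f(\Hcal_s)}$, which integrates to \eqref{eq:con-estimate}.

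For \eqref{eq:l2type-wave} I would apply $K$ as a derivation to $f := (s/t)^2\phi^2$. Leibniz gives $Kf = K[(s/t)^2]\phi^2 + (s/t)^2\cdot 2\phi\, K\phi$, and a direct calculation yields $K[(s/t)^2] = -2s^2 r^2/t^4$. On the other hand, integrating $Kf$ over $\Hcal_s$ in $(s,x)$-coordinates and using the key observation that $\underdel_a g|_{\Hcal_s}$ acts as the coordinate partial $\del_a$ in the $(s,x)$-parametrisation (so that integration by parts in $x$ is clean), one finds $\int_{\Hcal_s} K f\,dx = s\,\tfrac{d}{ds}\|(s/t)\phi\|_{L^2_f(\Hcal_s)}^2 - 4\|(s/t)\phi\|_{L^2_f(\Hcal_s)}^2$. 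Equating these two expressions for $\int Kf$ and rearranging leads, after the dust settles, to the identity
\be
\tfrac{d}{ds}\|(s/t)\phi\|_{L^2_f(\Hcal_s)}^2 = (2/s)\int_{\Hcal_s}(s/t)^2\phi(K\phi+\phi)\,dx + (2/s)\int_{\Hcal_s}(s/t)^4\phi^2\,dx,
\ee
from which Cauchy--Schwarz on the cross term (producing $E_{con}^{1/2}$) and the elementary bound $(s/t)^4 \leq (s/t)^2$ on the residual positive term yield a differential inequality of the form $\tfrac{d}{ds}\|(s/t)\phi\|_{L^2_f(\Hcal_s)} \lesssim s^{-1}E_{con}(s,\phi)^{1/2} + s^{-1}\|(s/t)\phi\|_{L^2_f(\Hcal_s)}$. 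A Gr\"onwall argument then delivers \eqref{eq:l2type-wave}, with the homogeneous growth factor $s/s_0$ absorbed into the implicit constant since $s_0 \geq 1$ is a fixed reference time.

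The main obstacle I anticipate is in the first step: verifying the divergence identity in full and confirming that the $+\phi$ correction is the unique one yielding the positive-definite boundary density $E_{con}$. Although conceptually this is the classical Morawetz manipulation, in the hyperboloidal slicing it requires careful bookkeeping of $t/s$ weights, best handled by expressing $\Box$ in the semi-hyperboloidal frame via \eqref{eq:wavedecomp} and systematically tracking which terms assemble into tangential derivatives $\underdel_a\phi$ (giving the $(s\underdel_a\phi)^2$ component of $E_{con}$) versus into transverse combinations rebuilding $K\phi+\phi$. A secondary point is the residual $s^{-1}\|(s/t)\phi\|_{L^2_f}$ term in the Gr\"onwall step for \eqref{eq:l2type-wave}, which forces the implicit constant in \eqref{eq:l2type-wave} to depend mildly on $s_0$; this is acceptable for the application in the present paper, where $s_0 = 2$ is fixed throughout.
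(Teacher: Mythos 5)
Your treatment of \eqref{eq:con-estimate} is essentially the paper's: the paper writes the explicit divergence identity for the multiplier $s(K\phi+\phi)(-\Box\phi)$, integrates over hyperboloids, and applies Cauchy--Schwarz; your $t(K\phi+\phi)$ multiplier against the Cartesian volume form $\di t\,\di x$ is the same thing after absorbing the coarea Jacobian $(s/t)$, and your identification of $tK$ with the Morawetz vector field $(t^2+r^2)\del_t + 2tx^a\del_a$ is correct. So the first half is fine, modulo the algebra you yourself flag.

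The genuine gap is in the final Gr\"onwall step of your proof of \eqref{eq:l2type-wave}. Your Leibniz computation of $\int_{\Hcal_s}Kf\,\di x$ with $f=(s/t)^2\phi^2$ is correct, and writing $F(s) := \|(s/t)\phi\|_{L^2_f(\Hcal_s)}$, you correctly arrive at
$$
\frac{\di}{\di s}\,F(s)^2 = \frac{2}{s}\int_{\Hcal_s}(s/t)^2\phi\,(K\phi+\phi)\,\di x + \frac{2}{s}\int_{\Hcal_s}(s/t)^4\phi^2\,\di x.
$$
The problem is that you then bound the positive residual term by $(2/s)F^2$, obtaining $F'\leq s^{-1}E_{con}^{1/2}+s^{-1}F$, and claim Gr\"onwall closes this. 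But the Gr\"onwall factor $\exp\big(\int_{s_0}^s \tau^{-1}\,\di\tau\big)=s/s_0$ is \emph{not} a constant: it grows linearly in $s$, and the resulting bound $F(s)\lesssim (s/s_0)F(s_0) + s\int_{s_0}^s \tau^{-2}E_{con}(\tau,\phi)^{1/2}\,\di\tau$ is strictly weaker than \eqref{eq:l2type-wave}. In the application, this would turn the uniform $C_0\eps$ in the proof of Proposition \ref{prop:lower-order-wave-sup} into a linearly growing quantity and break the bootstrap. There is no way to ``absorb $s/s_0$ into the implicit constant'' --- that sentence is where the argument fails.

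The fix is to \emph{not} integrate the residual term by parts into a $\phi^2$ density. Undoing your IBP, the residual $\int_{\Hcal_s}(s/t)^4\phi^2\,\di x$ equals $-\int_{\Hcal_s}(s/t)^2\phi\,x^a\underdel_a\phi\,\di x$. Left in that form, Cauchy--Schwarz with the weight split $(s/t)|\phi|\cdot(s/t)\,r\,|\underdel\phi|\leq (s/t)|\phi|\cdot s\,|\underdel\phi|$ (using $r\leq t$) bounds it by $F(s)\,\|s\,\underdel\phi\|_{L^2_f(\Hcal_s)}\leq F(s)\,E_{con}(s,\phi)^{1/2}$, i.e.\ it is controlled by the tangential part of the conformal energy rather than by $F^2$. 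This yields the clean inequality $F'(s)\leq 2 s^{-1}E_{con}(s,\phi)^{1/2}$ with no self-feedback term, which integrates directly to \eqref{eq:l2type-wave} with no Gr\"onwall at all. This is precisely how the paper's second differential identity is organised: it keeps $-2(s/t)^2 u\,x^a\underdel_a u$ on the right-hand side undigested, exactly so that the tangential-energy estimate is available.
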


\begin{proof}
The proof of \eqref{eq:con-estimate} and \eqref{eq:l2type-wave} follows from the differential identities 
$$
\aligned
& s \big( s\del_s u + 2x^a \underline{\del}_a u + u \big) \big( -\Box u \big)
\\
=
& {1\over 2} \del_s \big( s\del_s u + 2x^a \underline{\del}_a u + u \big)^2
+
{1\over 2} \del_s \big( s^2 \underline{\del}_b u \underline{\del}{}^b u  \big) 
-
s^2 \underline{\del}_b \big( \del_s u \underline{\del}{}^b u \big) 
-
2 s \overline{\del}_b \big( x^a  \underline{\del}_a u \underline{\del}{}^b u \big)
\\
+& s  \underline{\del}_a \big( x^a \underline{\del}_b u \underline{\del}{}^b u \big)
- s \underline{\del}_b \big( u \underline{\del}{}^b u \big).
\endaligned
$$
and
$$
\aligned
s \del_s \big( (s/t)^2 u^2 \big) + \underdel_a \big( x^a (s/t)^2 u^2 \big) 
=
2 (s / t)^2 u \big( Ku + u \big) - 2 (s / t)^2 u x^a \underdel_a u,
\endaligned
$$
respectively. The remaining details can be found in \cite{Ma2019}.
\end{proof}

Note that in three spatial dimensions one can use a Hardy estimate adapted to hyperboloids which is stronger than the estimate \eqref{eq:l2type-wave}. 

In conjunction with the above, we are also able to bound the scaling vector field in the proposition below. This is seen by writing the scaling
vector field in the semi-hyperboloidal frame as
$$
\frac{s}{t} L_0 \phi = \frac{s}{t}(K\phi + \phi) - \frac{x^a}{t}s \underdel_a\phi - \frac{s}{t}\phi.
$$

\begin{proposition}\label{prop:L2}
Let $\phi$ be a sufficiently regular function defined in the region $\Kcal_{[s_0, s]}$ and vanishing near $\del\Kcal_{[s_0, s]}$, the following estimate on the scaling vector field holds
$$ 
\big\| (s/t) L_0 \phi \big\|_{L^2_f(\Hcal_s)}
\lesssim
\big\| (s/t) \phi \big\|_{L^2_f(\Hcal_s)}
+
E_{con}(s, \phi)^{1/2}.
$$
\end{proposition}

Based on the bound on the scaling vector field, we can then make the following estimate, which crucially allows us to gain more $(t-r)$--decay: 
\bel{eq:202} 
\big| \del \phi \big|
\lesssim
(t-r)^{-1} \big( \big| L_0 \phi \big| + \sum_a \big| L_a \phi \big| \big).
\ee
Note that \eqref{eq:202} derives from an identity in \cite[\textsection II, Proposition 1.1]{Sogge}  and also relies  on our ability to estimate the angular momentum operators $x_a\del_b-x_b\del_a$ in terms of the Lorentz boosts $L_a$ in the support of the solution.

\subsection{Commutator estimates}
We first have the following identities
$$
\aligned
\left[ \del_t, L_a \right]&=\del_a , \quad 
[\del_b, L_a] = \delta_{ab} \del_t, \quad
[t, L_a] = -x_a , \quad
[x_b, L_a] = -t \delta_{ab},
\\
[ L_a, L_b ] &= x_a \del_b -x_b \del_a =t^{-1}(x_aL_b - x_bL_a).
\endaligned
$$
By using these identities and writing $L_0 = t\del_t+x^b\del_b$ and $K = t\del_t + x^b \del_b + (x^b/t)L_b$ we find
\bea\label{eq:CommutatorsK}
[\del_\alpha, L_0]&=\del_\alpha, \quad &[L_a, L_0]&=0,
\quad
[L_a, K] = (s/t)^2 L_a, \quad &[\del_a, K]&=(2/t)L_a.
\eea

The following lemma allows us to control the commutators. It is proven in \cite[\textsection3]{PLF-YM-book}.
\begin{lemma} \label{lem:est-comm}
Let $\phi$ be a sufficiently regular function supported in the region $\mathcal{K}$. Then, for any multi-indices $I$, there exist generic constants $C=C(|I|)$ such that
\begin{subequations}\label{eq:est-cmt}
\begin{align}
\label{eq:est-cmt1}
 \big| [Z^I, \del_\alpha] \phi \big| 
&\leq 
C \sum_{|I'|<|I|} \sum_\beta \big|\del_\beta Z^{I'} \phi \big|,
\\
\label{eq:est-cmt6}
 \big| Z^I ((s/t) \del_\alpha \phi) \big| 
&\leq 
|(s/t) \del_\alpha Z^I \phi| + C \sum_{| I'|<| I |}\sum_\beta \big|(s/t) \del_\beta Z^{I'} \phi \big|.
\end{align}
\end{subequations}
Recall here that Greek indices $\alpha, \beta \in \{0,1,2\}$ and Roman indices $a,b \in \{1,2\}$. 
\end{lemma}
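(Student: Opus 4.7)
The plan is to establish the two estimates in order, with \eqref{eq:est-cmt1} used as an ingredient in the proof of \eqref{eq:est-cmt6}.

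First I would prove \eqref{eq:est-cmt1} by induction on $|J|$. The base case $|J|=0$ is immediate, since $[\del^I,\del_\alpha]=0$ (ordinary partial derivatives commute). For the inductive step, write $L^J = L^{J'} L_a$ with $|J'| = |J|-1$ and expand $[\del^I L^{J'} L_a, \del_\alpha]$ using the algebraic identity $[AB,C] = A[B,C] + [A,C]B$. The commutation relations recorded just above \eqref{eq:CommutatorsK}, namely $[\del_t, L_a] = \del_a$ and $[\del_b, L_a] = \delta_{ab}\del_t$, show that commuting $\del_\alpha$ past a single $L_a$ always produces a single translation $\del_\beta$. Thus each step reduces the boost count by one at the cost of an extra partial derivative, and the induction hypothesis absorbs the resulting terms into the sum on the right of \eqref{eq:est-cmt1}.

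The proof of \eqref{eq:est-cmt6} rests on the auxiliary bound
$$
\big|\del^{I'} L^{J'}(s/t)\big| \lesssim s/t \quad\text{on } \Kcal, \qquad \text{for all } I', J',
$$
which I would also establish by induction. The base cases follow from direct computation: $\del_a(s/t) = -x_a/(st)$, $\del_t(s/t) = r^2/(st^2)$, and $L_a(s/t) = -(s/t)(x_a/t)$. The boost expression is directly $\lesssim s/t$ since $|x_a|\leq t$; the two partial-derivative expressions are each bounded by $1/s$, and the hyperboloidal relations \eqref{eq:s-t-identities} (which give $t \leq s^2/2$) then yield $1/s \lesssim s/t$ on $\Kcal$. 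For the inductive step, one applies $\del_\beta$ or $L_a$ to expressions built algebraically from $s$, $t$, and $x^a$, and the bound propagates by routine calculation.

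With these two ingredients in hand, \eqref{eq:est-cmt6} follows from the Leibniz rule applied to $\del^I L^J((s/t)\del_\alpha\phi)$. The principal Leibniz term, in which all derivatives fall on $\del_\alpha\phi$, equals $(s/t)\del^I L^J \del_\alpha\phi$; splitting this as $(s/t)\del_\alpha \del^I L^J \phi + (s/t)[\del^I L^J, \del_\alpha]\phi$ and invoking \eqref{eq:est-cmt1} produces the first term on the right of \eqref{eq:est-cmt6} plus contributions of the admissible form. Each remaining Leibniz term carries a factor $\del^{I_1} L^{J_1}(s/t)$ for some nontrivial sub-multi-indices, which the auxiliary bound replaces by a constant multiple of $s/t$; one more application of \eqref{eq:est-cmt1} then moves $\del_\alpha$ past the residual operators acting on $\phi$, placing these terms in the form $(s/t)|\del_\beta \del^{I'} L^{J'}\phi|$ with $|I'|\leq|I|$ and $|J'|\leq|J|$. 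The main technical obstacle is the combinatorial bookkeeping in the Leibniz expansion, together with the at-all-orders verification of the auxiliary bound; neither step is deep, and the key mechanism is simply that the weight $s/t$ is preserved, up to harmless constants, under the action of translations and Lorentz boosts inside $\Kcal$.
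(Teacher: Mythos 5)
The paper itself does not prove Lemma~\ref{lem:est-comm}; it cites \cite[\S3]{PLF-YM-book}, so there is no internal proof to compare against. Your reconstruction follows the standard approach used in that reference: establish \eqref{eq:est-cmt1} by induction on $|J|$ using the basic commutation relations, then prove \eqref{eq:est-cmt6} by Leibniz expansion, an auxiliary pointwise bound on $\del^{I'}L^{J'}(s/t)$, and a reapplication of \eqref{eq:est-cmt1}. Both halves of the argument are sound. Two small remarks are worth making. First, since the $\del_\alpha$ and the $L_a$ do not commute, the ``Leibniz rule'' you invoke is the iterated first-order-derivation expansion (a sum over order-preserving splits of the operator string), not the multi-index Leibniz rule for commuting derivatives; but every split still yields factors $\del^{I_1}L^{J_1}(s/t)$ and $\del^{I_2}L^{J_2}\del_\alpha\phi$ with $|I_k|\le|I|$, $|J_k|\le|J|$, which is all the argument needs. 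Second, the inductive step for $\bigl|\del^{I'}L^{J'}(s/t)\bigr|\lesssim s/t$ is slightly less ``routine'' than you suggest: applying $\del_y$ to $g(y)=\sqrt{1-|y|^2}$ naively produces negative powers of $g$, and the reason these do not spoil the bound is a genuine cancellation — the boosts preserve the structure $L^{J}(s/t)=g\cdot p(y)$ with $p$ a polynomial in $y=x/t$ (so $|L^{J}(s/t)|\lesssim s/t$ with no loss), while each translation contributes an extra $1/t$ which is absorbed using $t\le s^2/2$. Making the statement of that cancellation explicit (rather than relegating the full inductive step to ``routine calculation'') would complete the proof, but the mechanism you identify — that the weight $s/t$ is preserved under the admissible vector fields inside $\mathcal{K}$ — is exactly right.
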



%

\subsection{Pointwise Estimates}
We now state a Klainerman-Sobolev estimate in terms of the hyperboloidal coordinates. The proof is standard and can be found in \cite[\textsection 7]{Hormander} or \cite[\textsection 5]{PLF-YM-book}.

\begin{lemma}[Sobolev Estimate] \label{lem:sobolev}
For all sufficiently smooth functions $\phi= \phi(t, x)$ supported in $\Kcal$ and for all  $s \geq 2$, there exists a constant $C>0$ such that
\bel{eq:Sobolev2}
\sup_{\Hcal_s} \big| t \phi(t, x) \big|  \leq C \sum_{| J |\leq 2} \| L^J \phi \|_{L^2_f(\Hcal_s)}.
\ee
Furthermore we have
\bel{eq:Sobolev3}
\sup_{\Hcal_s} \big| s \phi(t, x) \big|  \leq C \sum_{| J |\leq 2} \| (s/t) L^J \phi \|_{L^2_f(\Hcal_s)},
\ee
\end{lemma}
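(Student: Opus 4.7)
The plan is to parametrise the hyperboloid $\Hcal_s$ by the spatial variable $x \in \RR^2$ and apply the standard two-dimensional Sobolev embedding on a Euclidean ball whose radius is comparable to $t_0$, in the spirit of \cite[\textsection 5]{PLF-YM-book}. Concretely, I would define
$$
\Phi(x) \define \phi\bigl(\sqrt{s^2+|x|^2},\, x\bigr), \qquad x \in \RR^2,
$$
so that $\|\Phi\|_{L^p(\RR^2)} = \|\phi\|_{L^p_f(\Hcal_s)}$ by the definition \eqref{flat-int}. The chain rule applied with $\del_{x^a} t(x) = x^a/t$ gives directly
$$
\del_{x^a}\Phi = (\underdel_a \phi)|_{\Hcal_s} = (L_a \phi/t)|_{\Hcal_s},
$$
and a second application, using $L_a(1/t) = -x^a/t^2$, yields
$$
\del_{x^a}\del_{x^b}\Phi = \Bigl.\frac{L_a L_b \phi}{t^2} - \frac{x^a L_b \phi}{t^3}\Bigr|_{\Hcal_s}.
$$
In particular, each $x$-derivative of $\Phi$ costs exactly one factor of $1/t$ relative to the boost fields $L^J$, modulo harmless lower-order terms of the same order in $1/t$.

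Next, for a fixed point $(t_0, x_0) \in \Hcal_s$, I would take the Euclidean ball $B \define B_{t_0/10}(x_0) \subset \RR^2$. A direct calculation using
$$
t(x)^2 \;=\; s^2 + |x|^2 \;=\; t_0^2 + 2 x_0 \cdot (x - x_0) + |x - x_0|^2
$$
together with $|x_0| < t_0$ shows that $t(x) \sim t_0$, and hence $s/t(x) \sim s/t_0$, uniformly on $B$. Now apply the rescaled two-dimensional Sobolev inequality
$$
|u(x_0)| \;\lesssim\; R^{-1}\|u\|_{L^2(B_R(x_0))} + \|\del_x u\|_{L^2(B_R(x_0))} + R\,\|\del_x^2 u\|_{L^2(B_R(x_0))}
$$
with $R = t_0/10$ to the function $\Phi$, and substitute the two derivative identities above. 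The powers of $t_0$ cancel precisely, giving
$$
t_0\,|\phi(t_0, x_0)| \;\lesssim\; \sum_{|J|\leq 2} \|L^J \phi\|_{L^2(B)} \;\leq\; \sum_{|J|\leq 2} \|L^J \phi\|_{L^2_f(\Hcal_s)},
$$
which is \eqref{eq:Sobolev2}. For \eqref{eq:Sobolev3}, I would multiply the above through by $s/t_0$ and use $s/t_0 \sim s/t(x)$ on $B$ to pull the weight inside the $L^2$ norms, obtaining
$$
s\,|\phi(t_0,x_0)| \;\lesssim\; \sum_{|J|\leq 2} \|(s/t) L^J \phi\|_{L^2_f(\Hcal_s)}.
$$

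The only step that requires any care is the comparison $t(x) \sim t_0$ on the entire ball $B$, uniformly in the base point $(t_0, x_0) \in \Hcal_s$ --- in particular when $|x_0|$ is close to $t_0$, as happens for points deep in the interior of the light cone at large $t_0$. This is where I would be most careful, but it follows cleanly from the displayed expansion of $t(x)^2$ together with $|x_0| < t_0$, which yields two-sided bounds $t(x)/t_0 \in [c, C]$ for universal constants depending only on the choice of radius $R = t_0/10$. Everything else is bookkeeping of the $1/t$ weights produced by the chain rule.
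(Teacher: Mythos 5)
Your proposal is correct and follows essentially the same standard argument that the paper delegates to \cite[\textsection 5]{PLF-YM-book} (the paper gives no proof of its own): parametrise $\Hcal_s$ by $x$, use the chain rule identity $\del_{x^a}\Phi = \underdel_a\phi = L_a\phi/t$, and apply a rescaled two-dimensional Sobolev embedding on a ball of radius $\sim t_0$ where $t(x)\sim t_0$ uniformly. The weighted estimate \eqref{eq:Sobolev3} is then obtained exactly as you describe, since $s/t(x)\sim s/t_0$ on the same ball.
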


Using the decomposition \eqref{eq:wavedecomp}, it is fairly straightforward to derive  the following lemma. We refer to \cite[\textsection8.1 and \textsection8.2]{PLF-YM-book} for a proof. 

\begin{lemma}\label{lem:Wavedeldel}
Suppose $\phi$ is a $C^2$ function supported in $\Kcal_{[s_0, s]}$  vanishing near $\del\Kcal$ satisfying
$$
\Box \phi = f.
$$
Then there exists a constant $C>0$ such that for all $\alpha, \beta\in\{0,1,2\}$
$$
|\del_\alpha \del_\beta \phi| \leq C\left(  \frac{1}{t-|x|} \left( |\del L \phi| + |\del \phi| \right) + \frac{t}{t-|x|} |f| \right).
$$
\end{lemma}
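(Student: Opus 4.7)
The plan is to exploit the decomposition \eqref{eq:wavedecomp} of the flat wave operator into the semi-hyperboloidal frame. The key observation is that this decomposition isolates $(s/t)^2\del_t^2 \phi$ as the leading term, so the operator $\del_t^2$ behaves in an elliptic fashion away from the light cone $\{|x|=t\}$. All other second derivatives can then be recovered via the hyperboloidal identity $\del_a = \frac{L_a}{t} - \frac{x_a}{t}\del_t$.

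First, I would rearrange the identity \eqref{eq:wavedecomp} applied to $\Box \phi = f$ to obtain
$$
(s/t)^2 \del_t^2 \phi = -f - t^{-1}\Bigl(2\tfrac{x^a}{t}L_a \del_t \phi - \sum_a L_a \underdel_a \phi - \tfrac{r^2}{t^2}\del_t \phi + 2\del_t \phi\Bigr).
$$
In $\Kcal$ we have $|x^a/t| \leq 1$, and the identity $s^2 = (t-r)(t+r) \geq t(t-r)$ gives $t^2/s^2 \lesssim t/(t-r)$. The first, third and fourth terms inside the parenthesis are directly bounded by a constant times $|\del L \phi| + |\del \phi|$. For $L_a \underdel_a \phi$, I would expand $\underdel_a = \del_a + (x_a/t)\del_t$ and commute the outer $L_a$ to the right using $[\del_a, L_b] = \delta_{ab}\del_t$, together with $L_a(x_b/t) = O(1)$; this preserves the structure $|\del L \phi| + |\del \phi|$ and, crucially, does not introduce any $|LL \phi|$ contribution. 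Dividing by $(s/t)^2$ and using $t^{-1} \leq (t-r)^{-1}$ then yields the desired estimate for $|\del_t^2 \phi|$.

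To treat $\del_t \del_a \phi$ and $\del_a \del_b \phi$, I would apply the hyperboloidal identity $\del_a = L_a/t - (x_a/t)\del_t$ to reduce each such term to a combination of $\del_t^2 \phi$ (already controlled by the previous step), $t^{-1}\del L \phi$ (absorbed into the RHS since $t^{-1} \leq (t-r)^{-1}$), and lower-order terms of the form $t^{-1}|\del\phi|$ arising from commutators such as $[\del_t, 1/t]$ and $[\del_a, x_b/t]$. Each intermediate coefficient is bounded by $1$ in $\Kcal$, so no new large factors appear. The main technical obstacle, as already encountered in the $L_a \underdel_a \phi$ term above, is making sure that no $|LL\phi|$ term ever leaks onto the right-hand side; this is controlled by systematically applying the commutator identities of Lemma \ref{lem:est-comm} so that every operator of the form $L\underdel$ is rewritten as $\del L$ (plus $\del$) before being estimated. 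Combining the bounds on $\del_t^2\phi$, $\del_t\del_a\phi$ and $\del_a\del_b\phi$ yields the stated inequality for all $\alpha,\beta \in \{0,1,2\}$.
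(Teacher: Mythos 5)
Your proposal is correct and follows essentially the same route the paper indicates: isolate $(s/t)^2\del_t^2\phi$ via the semi-hyperboloidal decomposition \eqref{eq:wavedecomp}, bound the remaining terms by $|\del L\phi|+|\del\phi|$ using the commutator identities, invert the $(s/t)^2$ weight with $s^2=(t-r)(t+r)\geq t(t-r)$, and then recover the mixed and purely spatial second derivatives from $\del_a=L_a/t-(x_a/t)\del_t$. This is precisely the argument the paper refers to in [Ma2017a, \S 6.1] without writing out; your care in rewriting every $L\underdel$ as $\del L$ plus $\del$ so that no $LL\phi$ term appears is the one non-trivial bookkeeping point, and you handle it correctly.
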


\section{The bootstrap argument}\label{sec:Proof}
As shown in \cite[\textsection11]{PLF-YM-book}, initial data posed on the hypersurface $\{t_0=2\}$ and localised in the unit ball $\{x\in\RR^2:|x|\leq 1\}$ can be developed as a solution of the PDE to the initial hyperboloid $\Hcal_{s_0}$ with the smallness conserved, where we have put $s_0=2$.
Thus by the definition of the hyperboloidal energy functional and smallness of the data from \eqref{eq:thm-data}, there exists a constant $C_0>0$ such that on the hyperboloid $\Hcal_{s_0}$ the following energy bounds hold for all $|I|\leq N$:
\bea\label{eq:BApre}
E(s_0, Z^I w)^{1/2}  + E_1 (s_0, Z^I \vec{v})^{1/2} \leq C_0 \eps. 
\eea
Fix $0<\delta\ll1$ a constant. 
We make the following bootstrap assumptions on the interval $[s_0, s_1)$
\begin{subequations} \label{eq:BA-Easy}
\begin{align}
E(s, Z^I w)^{1/2} + E_1 (s, Z^I \vec{v})^{1/2}
&\leq C_1 \eps s^\delta,
\quad
&|I| &\leq N, \label{eq:BA-Easy1}
\\
E(s, Z^I w)^{1/2} + E_1 (s, Z^I \vec{v})^{1/2}
&\leq C_1 \eps,
\quad
&|I|&\leq N-1, \label{eq:BA-Easy2}
\\
|\del Z^I w|
&\leq C_1 \eps s^{-1+\delta} (t-r)^{-1},
\quad
&|I| &\leq N-3, \label{eq:BA-Easy3}
\end{align}
\end{subequations}
in which $C_1 \gg C_0$, and $\eps \ll 1$ such that $C_1 \eps \ll 1$, and $s_1$ is defined as
$$
s_1 := \sup \{ s : s > s_0, \, \eqref{eq:BA-Easy}\,\, holds\}.
$$

By the definition of the energy functional $E_1(s, \cdot), E(s, \cdot)$, the bootstrap assumptions immediately imply the following $L^2$ estimates.

\begin{lemma}\label{lem:L2-BA}
Under the bootstrap assumptions \eqref{eq:BA-Easy}, we have
$$
\aligned
\big\| (s/t) \del Z^I w \big\|_{L^2_f(\Hcal_s)} + \big\| (s/t) \del Z^I \vec{v} \big\|_{L^2_f(\Hcal_s)} + \big\| Z^I \vec{v} \big\|_{L^2_f(\Hcal_s)}
&\lesssim C_1 \eps s^\delta,
\hspace{3pt}
&|I| \leq N,
\\
\big\| (s/t) \del Z^I w \big\|_{L^2_f(\Hcal_s)} + \big\| (s/t) \del Z^I \vec{v} \big\|_{L^2_f(\Hcal_s)} + \big\| Z^I \vec{v} \big\|_{L^2_f(\Hcal_s)}
&\lesssim C_1 \eps,
\quad
&|I| \leq N-1.
\endaligned
$$
\end{lemma}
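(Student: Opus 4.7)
The claim asserts two hierarchies of $L^2$ bounds that follow essentially by reading off the terms in the hyperboloidal energy functional \eqref{eq:2energy} and then plugging in the bootstrap bounds \eqref{eq:BA-Easy1}--\eqref{eq:BA-Easy2}. So the plan is simply to extract from $E_m(s,\phi)$ the pieces that match the norms on the left-hand side of the statement, with $\phi$ taken to be $\del^I L^J w$ or $\del^I L^J \vec v$.

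Concretely, the second line in the definition \eqref{eq:2energy} reads
$$
E_m(s, \phi) = \int_{\Hcal_s} \Big( \big((s/t)\del_t \phi\big)^2 + \sum_a \big(\underdel_a \phi\big)^2 + m^2 \phi^2 \Big) \, \di x,
$$
while the third line gives
$$
E_m(s, \phi) = \int_{\Hcal_s} \Big( \big(\underdel_\perp \phi\big)^2 + \sum_a \big((s/t)\del_a \phi\big)^2 + \sum_{a<b} \big(t^{-1}\Omega_{ab}\phi\big)^2 + m^2 \phi^2 \Big) \, \di x.
$$
From the first expression I directly get $\|(s/t)\del_t \phi\|_{L^2_f(\Hcal_s)} \leq E_m(s,\phi)^{1/2}$, and from the second $\|(s/t)\del_a \phi\|_{L^2_f(\Hcal_s)} \leq E_m(s,\phi)^{1/2}$ for $a=1,2$. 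Summing over $\alpha$ gives $\|(s/t)\del \phi\|_{L^2_f(\Hcal_s)} \lesssim E_m(s,\phi)^{1/2}$. In the massive case $m=1$ the mass term also yields $\|\phi\|_{L^2_f(\Hcal_s)} \leq E_1(s,\phi)^{1/2}$.

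Applying these with $\phi = \del^I L^J w$ (for which $m=0$) and with $\phi = \del^I L^J \vec v$ (for which $m=1$), I obtain
$$
\|(s/t)\del \del^I L^J w\|_{L^2_f(\Hcal_s)} \lesssim E(s, \del^I L^J w)^{1/2},
$$
$$
\|(s/t)\del \del^I L^J \vec v\|_{L^2_f(\Hcal_s)} + \|\del^I L^J \vec v\|_{L^2_f(\Hcal_s)} \lesssim E_1(s, \del^I L^J \vec v)^{1/2}.
$$
Inserting the bootstrap bound \eqref{eq:BA-Easy1} at the top range $|I|+|J|\leq N$ gives the $C_1\eps s^\delta$ estimate, and inserting \eqref{eq:BA-Easy2} at the lower range $|I|+|J|\leq N-1$ gives the uniform $C_1\eps$ estimate. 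No genuine obstacle arises here; the lemma is purely a bookkeeping corollary of the definition of the energy, recorded separately so that later bootstrap computations can directly quote $L^2$-form statements rather than re-derive them each time.
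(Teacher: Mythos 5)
Your proof is correct and matches the paper's intent exactly: the paper states the lemma follows ``immediately'' from the definition of the energy functional and the bootstrap assumptions, and your bookkeeping via the two alternative expressions for $E_m(s,\phi)$ in \eqref{eq:2energy} is precisely that observation spelled out. Nothing further is needed.
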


The bootstraps and Sobolev estimates \eqref{eq:Sobolev2} and \eqref{eq:Sobolev3} also immediately imply the following \textit{weak} decay estimates
\bel{eq:weak-decay-BA}
\aligned
|\del Z^I w|
&\lesssim C_1 \eps s^{-1+\delta},
\quad
&|I|&\leq N-2,
\\
|(s/t) \del Z^I \vec{v}|+ |Z^I \vec{v}|
&\lesssim C_1 \eps s^\delta t^{-1},
\quad
&|I|&\leq N-2,
\endaligned
\ee

\begin{lemma}[Strong decay estimates]\label{lem:decay-BA}
Let the estimates in the bootstrap assumptions \eqref{eq:BA-Easy} hold, then we have
$$
\aligned
|\del Z^I w|
&\lesssim C_1 \eps s^{-1},
\quad
&|I|&\leq N-3,
\\
|(s/t) \del Z^I \vec{v}| + |Z^I \vec{v}|
&\lesssim C_1 \eps t^{-1},
\quad
&|I| &\leq N-3,
\endaligned
$$
\end{lemma}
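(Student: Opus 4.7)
The plan is to derive Lemma~\ref{lem:decay-BA} from the Klainerman-Sobolev inequalities of Lemma~\ref{lem:sobolev}, the commutator estimates of Lemma~\ref{lem:est-comm}, and the uniform (lower-order) energy bootstrap~\eqref{eq:BA-Easy2}. Strong decay costs three extra derivatives in Sobolev's inequality relative to the raw order $|I|+|J|$: one for the explicit $\del$ appearing in the statement, plus two from the sum over $|K|\leq 2$ in \eqref{eq:Sobolev2}-\eqref{eq:Sobolev3}. The range $|I|+|J|\leq N-3$ is precisely what allows this budget to fit inside the uniform energy bound at order $\leq N-1$.

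For the wave piece, I would apply \eqref{eq:Sobolev3} to $\phi = \del_\alpha\del^I L^J w$, obtaining
\[
s\,|\del_\alpha \del^I L^J w| \lesssim \sum_{|K|\leq 2}\|(s/t)\,L^K \del_\alpha \del^I L^J w\|_{L^2_f(\Hcal_s)}.
\]
Then I would commute the $L^K$ past $\del_\alpha$ using \eqref{eq:est-cmt1}, which produces only terms of the form $(s/t)\del_\beta \del^{I'} L^{J'}w$ with $|I'|+|J'|\leq |I|+|J|+|K|\leq N-1$. All such terms are controlled by Lemma~\ref{lem:L2-BA} under \eqref{eq:BA-Easy2}, yielding the uniform $C_1\eps$ bound, and dividing by $s$ gives $|\del\del^I L^J w|\lesssim C_1\eps\, s^{-1}$.

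For the Klein-Gordon piece the reasoning is essentially the same, but the mass term in $E_1$ is now available. Applying \eqref{eq:Sobolev2} to $\del^I L^J \vec v$ and absorbing the commutators $[L^K,\del^I]$ via \eqref{eq:est-cmt1} yields
\[
t\,|\del^I L^J \vec v| \lesssim \sum_{|K|\leq 2,\ |I'|\leq |I|,\ |J'|\leq |J|+|K|}\|\del^{I'} L^{J'}\vec v\|_{L^2_f(\Hcal_s)} \lesssim C_1\eps,
\]
where the last step uses the mass component $\|\vec v\|_{L^2_f}\lesssim E_1(s,\vec v)^{1/2}$ of the energy, again at order $\leq N-1$. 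For $(s/t)\del\del^I L^J \vec v$ I would instead apply \eqref{eq:Sobolev3} to $\del_\alpha \del^I L^J \vec v$, argue as for the wave part to get $s|\del_\alpha \del^I L^J \vec v|\lesssim C_1\eps$, and then convert via $(s/t)s^{-1}= t^{-1}$.

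I expect no genuine obstacle: the step is book-keeping, and the only delicate point is making sure the commutator estimate \eqref{eq:est-cmt1} is invoked in the right order so that the $(s/t)$ weight stays attached to a single Cartesian derivative $\del_\beta$ rather than being lost through a commutator with $L^K$. Since $L^K$ acts by differentiation only and $(s/t)$ is common to all terms appearing on the right of \eqref{eq:Sobolev3}, this is cost-free. The tightness $|I|+|J|\leq N-3$ is exactly what one predicts from the derivative budget, and it is also the reason why the regularity threshold $N\geq 3$ in Theorem~\ref{thm1} is the lowest value for which the argument closes.
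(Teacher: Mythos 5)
Your proof is correct and is exactly the argument the paper intends; the paper merely states in one line that the result ``follows by combining the uniform energy bound in Lemma~\ref{lem:L2-BA} with the Sobolev estimate of Lemma~\ref{lem:sobolev} as well as the estimates for commutators from Lemma~\ref{lem:est-comm},'' and your write-up is the natural unpacking of that sentence, with the correct derivative accounting ($|I|+|J|\leq N-3$ plus two from the Sobolev sum plus one explicit $\del$ landing at order $\leq N-1$, where the uniform bootstrap \eqref{eq:BA-Easy2} is available).
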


\begin{proof}
The results follow by combining the uniform energy bound in Lemma \ref{lem:L2-BA} with the Sobolev estimate of Lemma \ref{lem:sobolev} as well as the estimates for commutators from Lemma \ref{lem:est-comm}. 
\end{proof}

\subsection{Lower-order bootstraps and the auxiliary system}

To refine the lower-order bootstrap assumptions we make a decomposition on our variables. 
Recall our initial data is
$$
\big( w, \del_t w, \vec{v}, \del_t \vec{v} \big)(2) = (w_0, w_1, \vec{v}_0, \vec{v}_1).
$$
Following Katayama \cite{Katayama12a}, we introduce the decomposition
\bel{eq:w-decomp}
w = W_1 + P^\alpha\del_\alpha W_2.
\ee 
The variables $W_1, W_2, v^i$ are the solutions to the auxiliary equations
\bel{eq:aux-wave-map}
\aligned
-\Box W_1 &=  \mathcal{F}^0(w, \vec{v}),
\\
-\Box W_2 &= |\vec{v}|^2,
\\
-\Box v^i + v^i &= \widetilde{P}^\beta v^i \del_\beta w + \mathcal{F}^i(w, \vec{v}), \qquad i=1,\ldots, p,
\endaligned
\ee
with initial data
$$
\big( W_1, \del_t W_1, W_2, \del_t W_2, \vec{v}, \del_t \vec{v} \big)(2) = (w_0, w_1- P^0 |\vec{v}_0|^2, 0,0, \vec{v}_0, \vec{v}_1).
$$
Using the bootstrap assumptions in \eqref{eq:BA-Easy} for the original variable $w$, we can derive estimates for the variables $W_1, W_2$. Once these are improved, we can then pass back to the original unknown $w$ by the relation \eqref{eq:w-decomp}.


\begin{proposition}\label{prop:lower-order-wave-sup}
Let the estimates in the bootstrap assumptions \eqref{eq:BA-Easy} hold. Then we have
$$ 
\aligned
\big\| (t-r) (s/t) \del Z^I w \big\|_{L^2_f(\Hcal_s)}
&\lesssim
\big(C_0\eps + (C_1 \eps)^2 \big) s^\delta, 
\quad &|I|\leq N-1,
\\
\big| \del Z^I w \big|
&\lesssim
\big(C_0\eps + (C_1 \eps)^2 \big) s^{-1+\delta} (t-r)^{-1}, 
\quad &|I|\leq N-3.
\endaligned
$$
\end{proposition}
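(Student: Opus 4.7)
The argument proceeds via the Katayama-type decomposition $w = W_1 + P^\alpha \del_\alpha W_2$ from \eqref{eq:w-decomp}, analysing each piece separately. For the $W_1$ contribution, fix $\Phi = \del^I L^J W_1$ with $|I|+|J|\leq N-1$ and apply the Sogge-type identity \eqref{eq:202}, which after multiplication by $(s/t)$ gives
\[
(t-r)(s/t)|\del \Phi| \lesssim (s/t)|L_0\Phi| + \sum_a (s/t)|L_a\Phi|.
\]
Since $(s/t)L_a = s\,\underdel_a$, the spatial-boost terms have $L^2_f(\Hcal_s)$ norm bounded directly by the conformal energy $E_{con}(s,\Phi)^{1/2}$ of Proposition \ref{lem:ConformalEnergy}, while Corollary \ref{coro:L2} controls $\|(s/t)L_0\Phi\|_{L^2_f}$ by $\|(s/t)\Phi\|_{L^2_f}+E_{con}(s,\Phi)^{1/2}$. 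I then bound $E_{con}(s,\Phi)^{1/2}$ through \eqref{eq:con-estimate}, whose source is $\Box\Phi = -\del^I L^J \mathcal{F}^0(w,\vec{v})$ modulo commutator terms from Lemma \ref{lem:est-comm}, and feed this back into \eqref{eq:l2type-wave} to control $\|(s/t)\Phi\|_{L^2_f}$.

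For the $W_2$ contribution I write $\del\del^I L^J(P^\alpha\del_\alpha W_2) = P^\alpha\del\del_\alpha\del^I L^J W_2$ modulo commutators, and apply Lemma \ref{lem:Wavedeldel} to $\del^I L^J W_2$:
\[
(t-r)(s/t)|\del\del\,\del^I L^J W_2| \lesssim (s/t)|\del L\,\del^I L^J W_2|+(s/t)|\del\,\del^I L^J W_2|+s\,|\Box\,\del^I L^J W_2|.
\]
Taking $L^2_f(\Hcal_s)$ norms, the first two terms are standard hyperboloidal energies of $W_2$ at order at most $N$, controlled by Proposition \ref{prop:BasicEnergyEstimate} with vanishing $W_2$-data and source $-|\vec{v}|^2$; the third reduces to $\|s\,\del^I L^J|\vec{v}|^2\|_{L^2_f}$, which is straightforward via Leibniz combined with Lemma \ref{lem:L2-BA} and the strong decay of Lemma \ref{lem:decay-BA}, since the fast pointwise decay $|\vec{v}|\lesssim t^{-1}$ provides ample room.

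For the pointwise bound with $|I|+|J|\leq N-3$, apply \eqref{eq:202} to $\Phi=\del^I L^J w$ to get $(t-r)|\del\Phi|\lesssim |L_0\Phi|+\sum_a|L_a\Phi|$, and then use Klainerman--Sobolev \eqref{eq:Sobolev3} pointwise on each boosted quantity: $|sL_\mu\Phi|\lesssim \sum_{|K|\leq 2}\|(s/t)L^K L_\mu\Phi\|_{L^2_f}$. The $L^2$ norms on the right involve vector fields of total order at most $N-1$; after substituting $w=W_1+P^\alpha\del_\alpha W_2$, they are controlled respectively by \eqref{eq:l2type-wave} fed by the conformal energy of $L^{\cdot}W_1$ (yielding the improved factor $C_0\eps+(C_1\eps)^2$) and by the standard energy of $L^{\cdot}W_2$. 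Combining delivers $|\del\Phi|\lesssim (C_0\eps+(C_1\eps)^2)s^{-1+\delta}(t-r)^{-1}$, as required.

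The principal obstacle will be the source estimate $\|\del^I L^J \mathcal{F}^0(w,\vec{v})\|_{L^2_f(\Hcal_\tau)}$ inside the conformal energy inequality. The fact that every term of $\mathcal{F}^0$ is cubic is precisely what allows the scheme to close in $n=2$; however the borderline term $v\,\del w\,\del w$ only barely yields the desired $s^\delta$ growth once multiplied by the factor of $\tau$ in \eqref{eq:con-estimate}. Closing it requires placing one $\del w$ in the sharp pointwise bootstrap \eqref{eq:BA-Easy3} (so as to exploit its essential $(t-r)^{-1}$ gain), the second $\del w$ in the $(s/t)$-weighted $L^2_f$ bootstrap of Lemma \ref{lem:L2-BA}, and $v$ in the $L^\infty$ decay of Lemma \ref{lem:decay-BA}, while carefully commuting derivatives past the product using \eqref{eq:est-cmt1}.
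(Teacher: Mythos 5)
Your treatment of the weighted $L^2$ estimate (first two paragraphs) follows the paper's route essentially exactly: Katayama decomposition, conformal energy plus Corollary~\ref{coro:L2} for $W_1$, and Lemma~\ref{lem:Wavedeldel} together with the standard hyperboloidal energy for $W_2$. That part is sound (you invoke ``strong decay'' where the paper uses the \emph{weak} decay \eqref{eq:weak-decay-BA}, which matters if one wants $N=3$, but that is a minor point).

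The pointwise estimate in your final paragraph, however, has a genuine gap. You apply \eqref{eq:202} to the \emph{full} $\Phi = \del^IL^J w$ and then control $|L_0\Phi|$ and $|L_a\Phi|$ via Sobolev, which requires bounding $\big\| (s/t) L^K L_0 \del^I L^J \del_\alpha W_2 \big\|_{L^2_f(\Hcal_s)}$ for the $W_2$ piece. You assert this is ``controlled by the standard energy of $L^{\cdot}W_2$,'' but it is not: $L_0 = t\,\underdel_\perp$ is not among the commuting vector fields, and the standard energy only yields $\|\underdel_\perp(\cdot)\|_{L^2_f}$, not $\|s\,\underdel_\perp(\cdot)\|_{L^2_f}=\|(s/t)L_0(\cdot)\|_{L^2_f}$. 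The only route available in the paper's toolbox is Corollary~\ref{coro:L2}, but that requires the conformal energy $E_{con}(s,\del_\alpha\del^{I'}L^{J'}W_2)^{1/2}$, whose growth is governed by
\[
\int_{s_0}^s \tau \big\| \del_\alpha \del^{I'}L^{J'}|\vec{v}|^2 \big\|_{L^2_f(\Hcal_\tau)} \, \di\tau
\lesssim \int_{s_0}^s (C_1\eps)^2 \tau^{\delta}\, \di\tau \sim (C_1\eps)^2 s^{1+\delta},
\]
which diverges. In other words, the divergence-form quadratic $P^\alpha\del_\alpha(|\vec{v}|^2)$ was precisely the term the Katayama transformation was designed to remove from the conformal-energy argument, and feeding its remnant $W_2$ back through $L_0$ undoes that gain. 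The paper avoids this by \emph{not} using \eqref{eq:202} on the $W_2$ part at all: it instead applies Lemma~\ref{lem:Wavedeldel} to $\del^I L^J W_2$, replacing the $L_0$ content with the term $\tfrac{t}{t-r}\,\big|\Box\,\del^I L^J W_2\big|$, which is a quadratic in $\vec{v}$ with sufficient pointwise decay ($\lesssim (C_1\eps)^2 s^{\delta} t^{-1}$) to close. You should therefore split the pointwise estimate exactly as you split the $L^2$ estimate: \eqref{eq:202} $+$ Sobolev $+$ conformal energy for $W_1$, and Lemma~\ref{lem:Wavedeldel} $+$ Sobolev on the standard energy for $W_2$. (Also, the phrase ``vector fields of total order at most $N-1$'' should read $N$: starting from $|I|+|J|\le N-3$, one unit from $L_\mu$ and two from $L^K$ gives $N$.)
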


\begin{proof}
We apply the first conformal-type energy estimate from Proposition \ref{lem:ConformalEnergy} to get, for $|I|\leq N$,
$$
\aligned
E_{con} (s, Z^I W_1)^{1/2}
&\lesssim 
E_{con} (s_0, Z^I W_1)^{1/2}
+
\int_{s_0}^s  \tau \big\| Z^I \mathcal{F}^0(w, \vec{v})  \big\|_{L^2_f(\Hcal_{\tau })} \, \di \tau.
\endaligned
$$
We aim to show the bound 
\begin{equation}\label{eq_F0bd}
\big\| Z^I \mathcal{F}^0(w, \vec{v})  \big\|_{L^2_f(\Hcal_{\tau })} \lesssim (C_1 \eps)^3 \tau^{-2+\delta},
\qquad
|I|  \leq N.
\end{equation}
We only estimate the term $(A^{1})^{\alpha\beta}_i v^i \del_\alpha w \del_\beta w$ in $\mathcal{F}^0(w, \vec{v})$ since the others can be estimated in the same way. Note that a key feature of $\mathcal{F}^0$ is that each term contains at least one undifferentiated Klein-Gordon field. 
For $|I|  \leq N$ with $N \geq 3$ we find that
\begin{align*}
&\quad
\big\| Z^I \big( \vec{v} \del w \del w \big)\big\|_{L^2_f(\Hcal_{\tau })}
\\
&\lesssim
\sum_{\substack{|I_1|  \leq N\\ |I_2| + |I_3| \leq N-3}} \Big(\big\| Z^{I_1}  \vec{v} \big\|_{L^2_f(\Hcal_{\tau })} \big\| Z^{I_2}  \del w Z^{I_3} \del w \big)\big\|_{L^\infty(\Hcal_{\tau })}
\\
&\hskip4cm +
\big\| (\tau/t) Z^{I_1}  \del w \big\|_{L^2_f(\Hcal_{\tau })} \big\| (t/\tau) Z^{I_2}  \vec{v} Z^{I_3} \del w \big)\big\|_{L^\infty(\Hcal_{\tau })} \Big)
\\
&\quad
+
\sum_{\substack{|I_1|  \leq N-1\\ |I_2| + |I_3| \leq N-2}} \Big(\big\| Z^{I_1}  \vec{v} \big\|_{L^2_f(\Hcal_{\tau })} \big\| Z^{I_2}  \del w Z^{I_3}  \del w \big)\big\|_{L^\infty(\Hcal_{\tau })}
\\
&\hskip4cm +
\big\| (\tau/t) Z^{I_1}  \del w \big\|_{L^2_f(\Hcal_{\tau })} \big\| (t/\tau) Z^{I_2}  \vec{v} Z^{I_3}  \del w \big)\big\|_{L^\infty(\Hcal_{\tau })} \Big)
\\
&\lesssim
\big( C_1 \eps \big)^3 \tau^{-2+\delta},
\end{align*}
in which we used the commutator estimates as well as the observation that in each product at most one growing factor of $\tau^\delta$ appears from the $L^2_f$ part or the $L^\infty$ part.

The second estimate from Proposition \ref{lem:ConformalEnergy} then implies, for $|I|\leq N$,
\bel{eq:L2W1}
\aligned
\big\| (s/t) Z^I W_1 \big\|_{L^2_f(\Hcal_s)}
&\lesssim
\big\| (s_0/t) Z^I W_1 \big\|_{L^2_f(\Hcal_{s_0})}
+
\int_{s_0}^s \tau^{-1} E_{con} (\tau, Z^I W_1)^{1/2} \, \di \tau
\\
&\lesssim
C_0\eps + (C_1 \eps)^3 s^\delta.
\endaligned
\ee
The estimates in Proposition \ref{prop:L2} then allow us to obtain, for $|I|\leq N$,
$$
\aligned
\big\| (s/t) L_0 Z^I W_1 \big\|_{L^2_f(\Hcal_s)}
&\lesssim
\big\| (s/t) Z^I W_1 \big\|_{L^2_f(\Hcal_s)}
+ E_{con} (s, Z^I W_1)^{1/2}
\\ &\lesssim
C_0\eps + (C_1 \eps)^3 s^\delta. 
\endaligned
$$
The Sobolev inequality \eqref{eq:Sobolev3} and commutator estimates \eqref{eq:CommutatorsK} and \eqref{eq:est-cmt1} then provide us with pointwise decay
$$
\big| L_0 Z^I W_1 \big| + \big| L Z^I W_1 \big|
\lesssim \big( C_0\eps + (C_1 \eps)^3 \big) s^{-1+\delta}, \quad |I| \leq N- 2.
$$
Then, by \eqref{eq:202} or the identities,
\bel{deriv-identities}
\del_t
=
{t^2 \over s^2} \Big( t^{-1} L_0-\frac{x^a}{t^2} L_a \Big),
\qquad
\del_a
=
-{ x^a \over s^2} L_0 + {x^a x^b \over t^3} L_b + t^{-1} L_a,
\ee
we obtain, for $|I|\leq N-2$,
\bel{eq:supW1-10}
\aligned
\big| \del Z^I W_1 \big|
&\lesssim
(t-r)^{-1} \big( \big| L_0 Z^I W_1 \big| + \big| L Z^I W_1 \big| \big)
\\
&\lesssim
\big(C_0\eps + (C_1 \eps)^3 \big) s^{-1+\delta} (t-r)^{-1}.
\endaligned
\ee
By using \eqref{deriv-identities} and  $s^2 = (t-r)(t+r)$ on $\Hcal_s$, we also find
$$
\big\| (t-r) (s/t) \del Z^I W_1 \big\|_{L^2_f(\Hcal_s)}
\lesssim
C_0\eps + (C_1 \eps)^3 s^\delta,
\qquad
|I|\leq N.
$$

As for the $W_2$ component of the decomposition \eqref{eq:w-decomp}, we first obtain energy estimates when $|I|\leq N$
\bel{eq:energy-W2}
\aligned
E(s, Z^I W_2)^{1/2} 
&\leq E(s_0, Z^I W_2)^{1/2}  + \int_{s_0}^s \| Z^I |\vec{v}|^2\|_{L_f^2(\Hcal_\tau)} \di \tau
\\
&\lesssim C_0\eps +  \int_{s_0}^s (C_1\eps)^2 \tau^{-1+\delta} \di \tau
\\
& \lesssim C_0\eps + (C_1 \eps)^2 s^\delta.
\endaligned
\ee
Combining this estimate with the Sobolev estimate \eqref{eq:Sobolev3} yields a pointwise bound
\bel{eq:supW2-10}
|\del Z^I W_2| \lesssim (C_0\eps + (C_1 \eps)^2) s^{-1+\delta}, \qquad |I|\leq N-2.
\ee
Now by Lemmas \ref{lem:Wavedeldel} and \ref{lem:decay-BA}, decay estimates \eqref{eq:weak-decay-BA} and the commutator estimate \eqref{eq:est-cmt1}, we obtain, for $|I|\leq N-3$,
$$
\aligned
\big| \del \del Z^I W_2 \big|
&\lesssim
(t-r)^{-1} \Big( |\del L Z^I W_2 \big| + \big| \del Z^I W_2 \big| + t \big| Z^I (|\vec{v}|^2) \big| \Big)
\\
&\lesssim \big(C_0 \eps + (C_1 \eps)^2 \big) s^{-1+\delta} (t-r)^{-1} + (C_1\eps)^2 s^\delta t^{-1} (t-r)^{-1}
\\
&\lesssim 
\big(C_0\eps + (C_1 \eps)^2 \big) s^{-1+\delta} (t-r)^{-1} .
\endaligned
$$

By again using Lemma \ref{lem:Wavedeldel}, together with equations \eqref{eq:s-t-identities}, \eqref{eq:est-cmt1}, \eqref{eq:weak-decay-BA} and \eqref{eq:energy-W2} (and provided  $\lfloor\frac{N-1}{2}\rfloor\leq N-2$), we also obtain
$$
\big\| (t-r) (s/t) \del \del Z^I W_2 \big\|_{L^2_f(\Hcal_s)}
\lesssim
\big(C_0\eps + (C_1 \eps)^2 \big) s^\delta,
\qquad
|I|\leq N-1.
$$

We can now bring these estimates all together. Equations \eqref{eq:supW1-10}, \eqref{eq:supW2-10} and \eqref{eq:est-cmt1} imply, for $|I|\leq N-3$,
$$
\aligned
\big| \del Z^I w \big|
&\lesssim
\big| \del Z^I W_1 \big|
+
\big|P^\alpha \del Z^I \del_\alpha W_2 \big|
\\
&
\lesssim
\big(C_0\eps + (C_1 \eps)^2 \big) s^{-1+\delta} (t-r)^{-1}.
\endaligned
$$
Similarly, for $|I|\leq N-1$,
$$
\aligned
\big\| (t-r) (s/t) \del Z^I w \big\|_{L^2_f(\Hcal_s)}
&\lesssim
\big\|(t-r) (s/t) \del Z^I W_1 \big\|_{L^2_f(\Hcal_s)}
+
\big\| (t-r) (s/t) \del Z^I \del W_2 \big\|_{L^2_f(\Hcal_s)}
\\
&\lesssim
\big(C_0\eps + (C_1 \eps)^2 \big) s^{\delta}.
\endaligned
$$
\end{proof}

\begin{proposition}\label{prop:lower-order-wave-energy}
Let the estimates in the bootstrap assumptions \eqref{eq:BA-Easy} hold. Then we have
$$
E(s, Z^I w)^{1/2}
\lesssim C_0 \eps + (C_1 \eps)^{3/2},
\qquad
|I|\leq N-1.
$$
\end{proposition}

\begin{proof}

We act $Z^I$ with $|I|\leq N-1$ to the $w$ equation in \eqref{eq:model-wave-map}. The second energy estimate of Proposition \ref{prop:BasicEnergyEstimate} then implies
$$
\aligned
E (s, Z^I w)
&\lesssim
E (s_0, Z^I w)
\\
&\quad 
+
\int_{s_0}^s \left\| (\tau/t) \del_t Z^I w \cdot Z^I \big( P^\alpha \del_\alpha (|\vec{v}|^2) + \mathcal{F}^0(w, \vec{v})  \big) \right\|_{L^1_f(\Hcal_\tau)} \, d\tau.
\endaligned
$$

First note that \eqref{eq:s-t-identities}, Lemma \ref{lem:L2-BA}, \eqref{eq:weak-decay-BA} and $s^2 = (t-|x|)(t+|x|)$ imply
\begin{subequations}\label{eq:weighted-v-decay}
\begin{align}
\sup_{(t,x)\in\Hcal_s}|(t-|x|)^{-1} Z^I  \vec{v}(t,x)| 
&\lesssim (C_1 \eps) s^{-2+\delta}, \qquad |I|\leq N-2,  \label{eq:weighted-v-decay1}
\\
\sup_{(t,x)\in\Hcal_s}|(t-|x|)^{-1} \del Z^I \vec{v}(t,x)| 
&\lesssim (C_1 \eps) s^{-2+\delta}, \qquad |I|\leq N-3. \label{eq:weighted-v-decay2}
\end{align}
\end{subequations}
Using this, and provided that $\lfloor\tfrac{N-3}{2}\rfloor+1\leq N-2$, we observe that, for $|I|\leq N-1$,
$$
\aligned
& \left\| (t-r)^{-1} Z^I \big( P^\alpha \del_\alpha (|\vec{v}|^2) \big) \right\|_{L^2_f(\Hcal_\tau)}
\lesssim
\sum_{|I_1| \leq N-1} \big\| Z^{I_1} \vec{v} \big\|_{L^2_f(\Hcal_\tau)} \big\| (t-r)^{-1} \del \vec{v} \big\|_{L^\infty(\Hcal_\tau)} 
\\
&\quad
+ \sum_{\substack{|I_1| \leq N-1\\ |I_2| \leq N-2}} \big\| Z^{I_1} \del \vec{v} \big\|_{L^2_f(\Hcal_\tau)} \big\| (t-r)^{-1} Z^{I_2}\vec{v} \big\|_{L^\infty(\Hcal_\tau)}.
\endaligned
$$
Thus using the commutator estimates \eqref{eq:est-cmt1}, together with Lemma \ref{lem:L2-BA} and Proposition \ref{prop:lower-order-wave-sup}, we obtain   
\bel{eq:weighted-L1-unifw}
\aligned
\quad
\big\| &(\tau/t) \del_t Z^I w \cdot Z^I \big( P^\alpha \del_\alpha (|\vec{v}|^2) \big) \big\|_{L^1_f(\Hcal_\tau)}
\\
&\lesssim
\big\| (t-r) (\tau/t) \del_t Z^I w \big\|_{L^2_f(\Hcal_\tau)} \left\| (t-r)^{-1} Z^I \big( P^\alpha \del_\alpha (|\vec{v}|^2) \big) \right\|_{L^2_f(\Hcal_\tau)}
\\
&\lesssim
\big(C_0 \eps + (C_1 \eps)^2 \big) \tau^{\delta} \cdot (C_1 \eps \tau^\delta) \cdot (C_1\eps  \tau^{-2+\delta}).
\endaligned
\ee

We now turn to the first term in $\mathcal{F}^0$. Provided $\lfloor \frac{N-1}{2}\rfloor \leq N-2$ we find, for $|I|\leq N-1$,
$$
\aligned
&\left\| (t-r)^{-1} Z^I (v^i \del w \del w) \right\|_{L^2_f(\Hcal_\tau)}
\lesssim
 \sum_{|I_1| \leq N-1} \big\| Z^{I_1} \vec{v} \big\|_{L^2_f(\Hcal_\tau)}  \sum_{|I_2| \leq N-2} \big\|Z^{I_2} \del w \big\|_{L^\infty(\Hcal_\tau)} ^2
\\
&\qquad + \sum_{|I_1| \leq N-1} 
\big\|(\tau/t) Z^{I_1} \del w \big\|_{L^2_f(\Hcal_\tau)} 
\sum_{|I_2| \leq N-2}\big\|(t/\tau) Z^{I_2}(v^i \del w) \big\|_{L^\infty(\Hcal_\tau)}.
\endaligned
$$
So using the commutator estimates \eqref{eq:est-cmt1}, together with Lemma \ref{lem:L2-BA}, the weak-decay estimates \eqref{eq:weak-decay-BA} and Proposition \ref{prop:lower-order-wave-sup}, we find
$$
\aligned
&\left\| (\tau/t) \del_t Z^I w \cdot Z^I \big( v^i \del w \del w \big) \right\|_{L^1_f(\Hcal_\tau)}
\\
&\quad \lesssim 
\big\| (t-r) (\tau/t) \del_t Z^I w \big\|_{L^2_f(\Hcal_\tau)} 
\| (t-r)^{-1} Z^I (v^i \del w \del w) \|_{L^2_f(\Hcal_\tau)}
\\
&\quad \lesssim
\big( C_0\eps + (C_1 \eps)^2  \big) \tau^\delta \cdot (C_1 \eps) \cdot (C_1 \eps \tau^{-2+2\delta}).
\endaligned
$$

The other terms in $\mathcal{F}^0$ are easier to estimate, and so we obtain, for $|I| \leq N-1$,
$$
\left\| (\tau/t) \del_t Z^I w \cdot Z^I \big( P^\alpha \del_\alpha (|\vec{v}|^2) + \mathcal{F}^0(w, \vec{v})  \big) \right\|_{L^1_f(\Hcal_\tau)}
\lesssim
(C_1 \eps)^3 \tau^{-2+3\delta}.
$$
Since $\delta \ll1 $, by the energy inequality above we have 
$$
E (s, Z^I w)
\lesssim
(C_0 \eps)^2 + (C_1 \eps)^3,
$$
and this finishes the proof.
\end{proof}

\begin{remark}\label{rem:nonlinear-structure}
In the above proposition, it is very important that we use the $L^1$ version of the energy inequality in order to distribute certain $(t-r)$ weights across the terms, see \eqref{eq:weighted-L1-unifw}. We are also careful when distributing commutators across the quadratic nonlinearity  $v \del v$. In particular, when at least one commutator hits the $\del v$ component, we absorb terms like $\| Z^I \del \vec{v}\|_{L^2_f(\Hcal_s)}$, for $|I|\leq N-1$, by the `mass' term contained in the top-order energy $E(s, Z^I v)^{1/2}$ with $|I|\leq N$. Although this produces some $s^\delta$ growth, by taking the $\del v$ term in $L^2$, we can then use the strong pointwise decay estimate \eqref{eq:weighted-v-decay1} on $v$. This also means we require less regularity than if we had used  \eqref{eq:weighted-v-decay2}. Note that this approach would fail if the nonlinearity took the form $\del v \del v$ since we there would be no benefit to distributing $(t-r)$ weights across the components, and higher regularity assumptions on the initial data are need to make it work.
\end{remark}

\begin{proposition}\label{prop:lower-order-KG-energy}
Let the estimates in the bootstrap assumptions \eqref{eq:BA-Easy} hold. Then we have
$$
E_1 (s, Z^I \vec{v})^{1/2}
\lesssim  C_0\eps + (C_1 \eps)^{3/2},
\qquad
|I|\leq N-1.
$$
\end{proposition}
\begin{proof}
We consider the second energy estimate \eqref{eq:w-EE-L1} from Proposition \ref{prop:BasicEnergyEstimate}. This gives
$$
\aligned
E_1(s, Z^I v^i) 
& \lesssim E_1(s_0, Z^I v^i) + 
\int_{s_0}^s \| (\tau/t)\del_t Z^I v^i \cdot Z^I (-\Box v^i + v^i) \|_{L_f^1(\Hcal_\tau)}\di \tau.
\endaligned
$$
We first estimate the quadratic term in the nonlinearity. 
Using \eqref{eq:BA-Easy3} and $s^2 = (t-|x|)(t+|x|)$ we see
\bel{eq:weighted-w-decay}
\sup_{(t,x)\in \Hcal_s} |(s/t)\del Z^I w(t,x)|
\lesssim C_1 \eps s^{-2+\delta} ,
\qquad
|I| \leq N-3.
\ee
Using \eqref{eq:weighted-w-decay}, and provided that $\lfloor \tfrac{N-3}{2}\rfloor+1\leq N-2$, we find, for $|I|\leq N-1$,
$$ 
\aligned
\big\| &(\tau/t) Z^I (v \del w )\big\|_{L_f^2(\Hcal_\tau)}
\lesssim
\sum_{|I_1| \leq N-1}
\| Z^{I_1} \vec{v}\|_{L_f^2(\Hcal_\tau)} \| (\tau/t)\del w \|_{L^\infty(\Hcal_\tau)}
\\
&\qquad +\sum_{\substack{|I_1| \leq N-2,\\ |I_2|\leq N-1}}
\| (t-r)^{-1}Z^{I_1} \vec{v}\|_{L^\infty(\Hcal_\tau)} \| (t-r) (\tau/t)Z^{I_2} \del w \|_{L^2_f(\Hcal_\tau)} .
\endaligned
$$
Thus using the commutator estimates  \eqref{eq:est-cmt1}, together with the decay estimates \eqref{eq:weighted-v-decay1} and \eqref{eq:weighted-w-decay}, Lemma \ref{lem:L2-BA} and Proposition \ref{prop:lower-order-wave-sup}, we obtain
$$ 
\aligned
&\quad
\big\| (\tau/t)\del_t Z^I v^i \cdot Z^I (v \del w )\big\|_{L_f^1(\Hcal_\tau)}
\\
&\lesssim
\sum_{|I|\leq N-1} \| \del Z^I \vec{v}\|_{L^2(\Hcal_\tau)} 
\sum_{|I|\leq N-1}  \big\| (\tau/t) Z^I (v \del w )\big\|_{L_f^2(\Hcal_\tau)}
\\
&\lesssim
(C_1 \eps \tau^\delta) \cdot (C_1 \eps \tau^\delta) \cdot (C_1 \eps \tau^{-2+\delta}).
\endaligned
$$
Similar to the estimate \eqref{eq_F0bd} proven already, for $|I|\leq N-1$ we have
$$ 
\| Z^I \mathcal{F}^i(w, \vec{v})\|_{L^2(\Hcal_\tau)} \lesssim (C_1 \eps)^3 \tau^{-2}.
$$
Putting these together we find
$$ 
\aligned
E(s, Z^I v^i) 
& \lesssim E(s_0, Z^I v^i) + 
\int_{s_0}^s (C_1 \eps)^3 \tau^{-2+3\delta}\di \tau
\lesssim (C_0\eps)^2 + (C_1\eps)^3,
\endaligned
$$
and thus
$$
E(s, Z^I v^i)^{1/2} \lesssim C_0\eps + (C_1 \eps)^{3/2}, \qquad |I|\leq N-1.
$$
\end{proof}

\subsection{Top-order bootstraps and proof of Theorem \ref{thm1}}

\begin{proposition}\label{prop:high-order-wave-KG-energy}
Let the estimates in the bootstrap assumptions \eqref{eq:BA-Easy} hold. Then we have
$$
E(s, Z^I w)^{1/2} + E_1 (s, Z^I v^i)^{1/2}
\lesssim C_0\eps + (C_1 \eps)^2 s^\delta,
\quad
|I|\leq N.
$$
\end{proposition}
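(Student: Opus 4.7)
The plan is to apply $\del^IL^J$ with $|I|+|J|\leq N$ to both the wave equation for $w$ and the Klein--Gordon equation for $v^i$, and then use the standard $L^2$ energy estimate \eqref{eq:w-EE} of Proposition \ref{prop:BasicEnergyEstimate}. This reduces the problem to establishing a bound of the form
$$
\big\|\del^IL^J(\text{RHS})\big\|_{L^2_f(\Hcal_\tau)} \lesssim (C_1\eps)^2 \tau^{-1+\delta},
$$
for integrating in $\tau$ over $[s_0,s]$ then produces exactly the $(C_1\eps)^2 s^\delta$ contribution claimed.

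First I would control the dominant quadratic term $P^\alpha \del_\alpha(|\vec v|^2)\sim \vec v\cdot\del\vec v$ of the wave equation. Expanding $\del^IL^J$ via Leibniz and the commutator identity \eqref{eq:est-cmt1} yields a finite sum of products $\del^{I_1}L^{J_1}\vec v\cdot\del^{I_2}L^{J_2}\del\vec v$ with total derivative order at most $N$. I would then split according to which factor carries the higher derivative count. When the top derivatives fall on $\vec v$, I place that factor in $L^2$ using the mass contribution to $E_1^{1/2}$, namely $\|\del^{I_1}L^{J_1}\vec v\|_{L^2_f}\lesssim C_1\eps\tau^\delta$ from Lemma \ref{lem:L2-BA}, and place $\del\vec v$ in $L^\infty$ via $|\del\vec v|=(t/s)|(s/t)\del\vec v|\lesssim C_1\eps s^{-1}$ obtained from the strong decay estimates of Lemma \ref{lem:decay-BA}. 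In the opposite case, I pair the weighted $L^2$ estimate $\|(s/t)\del^{I_2}L^{J_2}\del\vec v\|_{L^2_f}\lesssim C_1\eps\tau^\delta$ with $\|(t/s)\vec v\|_{L^\infty}\lesssim C_1\eps\tau^{-1}$. Either allocation gives the required $(C_1\eps)^2\tau^{-1+\delta}$.

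Next, the dominant term $\widetilde P^\alpha v^i\del_\alpha w$ of the Klein--Gordon equation is handled in complete analogy. If the top derivatives fall on $v^i$, I use the $L^2$ bound from the mass term paired with the pointwise decay $|\del w|\lesssim C_1\eps s^{-1}$ from Lemma \ref{lem:decay-BA}; if they fall on $\del w$, I use $\|(s/t)\del^{I_2}L^{J_2}\del w\|_{L^2_f}\lesssim C_1\eps\tau^\delta$ paired with $\|(t/s) v^i\|_{L^\infty}\lesssim C_1\eps\tau^{-1}$. The cubic and higher-order pieces inside $\mathcal F^0, \mathcal F^i$ are strictly easier, since each additional factor contributes an extra $t^{-1}$ of pointwise decay, reducing those terms to bounds of order $(C_1\eps)^3\tau^{-2+\delta}$ or better.

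The hard part will be to arrange the Leibniz splitting so that \emph{only one} factor of $\tau^\delta$ enters each product bound. If two such factors were to multiply, the resulting $\tau^{-1+2\delta}$ integrand would produce $\tau^{2\delta}$ growth after integration and be incompatible with the target $s^\delta$. This forces the factor placed in $L^\infty$ to enjoy the strong (no-loss) pointwise decay of Lemma \ref{lem:decay-BA}, while the single $\tau^\delta$ loss is carried by the $L^2$ factor controlled by the top-order energy. Executing this bookkeeping carefully is precisely what permits the bootstrap to close at top order despite the critical nature of the quadratic nonlinearities in two spatial dimensions.
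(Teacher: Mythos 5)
Your plan is essentially the paper's proof: apply $\del^IL^J$ with $|I|+|J|\leq N$, invoke the standard $L^2$ energy estimate \eqref{eq:w-EE}, and reduce the whole argument to showing $\|\del^IL^J(\text{RHS})\|_{L^2_f(\Hcal_\tau)}\lesssim (C_1\eps)^2\tau^{-1+\delta}$. Your treatment of the extreme Leibniz splits (all derivatives on $\vec v$, or all on $\del\vec v$/$\del w$) is exactly right and exactly what the paper does.

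However, there is a gap in the general principle you articulate in the last paragraph, and it matters at the low regularity $N=3$ that the theorem targets. You write that avoiding a double $\tau^\delta$ loss ``forces the factor placed in $L^\infty$ to enjoy the strong (no-loss) pointwise decay of Lemma \ref{lem:decay-BA}, while the single $\tau^\delta$ loss is carried by the $L^2$ factor controlled by the top-order energy.'' That is only one of two admissible bookkeepings, and it fails for the intermediate splits. Lemma \ref{lem:decay-BA} only gives the strong (no-loss) pointwise decay at order $\leq N-3$; with $N=3$ this is order $0$. So for a split such as $\del^{I_1}L^{J_1}\vec v\cdot\del^{I_2}L^{J_2}\del\vec v$ with $(|I_1|+|J_1|,\,|I_2|+|J_2|)=(2,1)$ or $(1,2)$, the lower factor already exceeds the regularity range of Lemma \ref{lem:decay-BA}, and your stated strategy has nothing to say. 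The correct resolution in those cases swaps the roles: the factor placed in $L^2_f$ then has order $\leq N-1$, so one uses the \emph{uniform} lower-order energy bound \eqref{eq:BA-Easy2} (no $\tau^\delta$ loss), while the factor placed in $L^\infty$ is estimated with the \emph{weak} decay estimates \eqref{eq:weak-decay-BA} valid up to order $N-2$, which do cost a $\tau^\delta$. Either allocation yields a single $\tau^\delta$, which is all that is needed. This is precisely what the paper's one-line remark ``we make use of the lower-order uniform energy bounds in Lemma \ref{lem:L2-BA} and the weak decay estimates \eqref{eq:weak-decay-BA} to reduce the required regularity as much as possible'' is pointing at. You should correct the stated principle: the constraint is ``at most one $\tau^\delta$ per product,'' and which factor carries it depends on where the derivatives land.
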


\begin{proof}
As in the proof of the previous two propositions, we make use of the lower-order uniform energy bounds in Lemma \ref{lem:L2-BA} and the weak decay estimates \eqref{eq:weak-decay-BA} to reduce the required regularity as much as possible. 
We find for $|I|\leq N$
$$
\aligned
\| Z^I \mathcal{F}^0(w, \vec{v})\|_{L_f^2(\Hcal_s)}
+ \| Z^I \mathcal{F}^i(w, \vec{v})\|_{L_f^2(\Hcal_s)} &\lesssim (C_1 \eps)^3 s^{-2+\delta} ,
\\
\| Z^I P^\alpha \del_\alpha(|\vec{v}|^2) \|_{L_f^2(\Hcal_s)}
&\lesssim (C_1 \eps)^2 s^{-1+\delta},
\\
\| Z^I (\widetilde{P}^\alpha v^i \del_\alpha w ) \|_{L_f^2(\Hcal_s)}
&\lesssim (C_1 \eps)^2 s^{-1+\delta}.
\endaligned
$$
The conclusion then follows by the first energy estimate of Proposition \ref{prop:BasicEnergyEstimate}. 
\end{proof}

\begin{proof}[Proof of Theorem \ref{thm1}]
We can now bring together all the components of the bootstrap argument to conclude the main theorem. Firstly, as shown in \cite[\textsection11]{PLF-YM-book}, initial data posed on the hypersurface $\{t_0=2\}$ and localised in the unit ball $\{x\in\RR^2:|x|\leq 1\}$ can be developed as a solution of the PDE to the initial hyperboloid $\Hcal_{s_0=2}$ with the smallness conserved. This justifies the bound \eqref{eq:BApre}. Next, by classical local existence results for quasilinear hyperbolic PDEs, the bounds \eqref{eq:BA-Easy} hold whenever the solution exists. Clearly $s_1>s_0$, and if $s_1<+\infty$ then one of the inequalities in \eqref{eq:BA-Easy} must be an equality. We see then that Propositions \ref{prop:high-order-wave-KG-energy}, \ref{prop:lower-order-wave-sup}, \ref{prop:lower-order-wave-energy} and \ref{prop:lower-order-KG-energy} imply that by choosing $C_1$ sufficiently large and $\eps$ sufficiently small, the bounds \eqref{eq:BA-Easy} are in fact refined. This then implies that $s_1=+\infty$ and so the local solution extends to a global one. The decay estimates \eqref{eq:thm-decay}  are shown via Lemma \ref{lem:decay-BA}. The decay estimate \eqref{eq:thm-decay2} is easily obtained by applying the Sobolev inequality \eqref{eq:Sobolev3} to the estimate \eqref{eq:L2W1} and combining this with \eqref{eq:supW2-10}. 
\end{proof}

{\footnotesize
 
}

\end{document}